\newtheorem{theorem}{Theorem} [section]
\newtheorem{lemma}[theorem]{Lemma}
\newtheorem{remark}[theorem]{Remark}
\newtheorem{corollary}[theorem]{Corollary}
\newcommand{\noi}{\noindent}
\newcommand{\R}{\mathbb{R}}
\newcommand{\T}{\mathbb{T}}
\newcommand{\N}{\mathcal{N}}
\newcommand{\RR}{\mathcal{R}}
\newcommand{\al}{\alpha}
\newcommand{\eps}{\varepsilon}
\newcommand{\ld}{\lambda}
\newcommand{\Ld}{\Lambda}
\newcommand{\ft}{\widehat}
\newcommand{\wt}{\widetilde}
\newcommand{\dx}{\partial_x}
\newcommand{\dt}{\partial_t}
\newcommand{\jb}[1]
{\langle #1 \rangle}
\numberwithin{equation}{section}
\numberwithin{theorem}{section}
\begin{document}


\date{\today}

\title
[Unconditional well-posedness of mKdV]
{On unconditional  well-posedness  of modified KdV }

\author{Soonsik Kwon and Tadahiro Oh}

\address{
Soonsik Kwon\\
Department of Mathematical Sciences\\
Korea Advanced Institute of Science and Technology\\
335 Gwahangno \\ Yuseong-gu, Daejeon 305-701, Republic of Korea}

\email{soonsikk@kaist.edu}

\address{Tadahiro Oh\\
Department of Mathematics\\
University of Toronto\\
40 St. George St, Toronto, ON M5S 2E4, Canada}

\curraddr{Department of Mathematics\\
Princeton University\\
Fine Hall, Washington Rd\\
Princeton, NJ 08544-1000, USA}

\email{hirooh@math.princeton.edu}

\subjclass[2000]{35Q53}

\keywords{modified KdV; well-posedness; uniqueness}

\thanks{The first author is supported by NRF grant 2010-0024017.}

\begin{abstract}
Bourgain \cite{BO1} proved that the periodic modified KdV equation (mKdV)
is locally well-posed in $H^s(\T)$, $s \geq 1/2$,
by introducing new weighted Sobolev spaces $X^{s, b}$, where the uniqueness holds
{\it conditionally}, namely in $C([0, T]; H^s) \cap X^{s, \frac{1}{2}}([0,  T]\times \T)$.
In this paper, we establish {\it unconditional} well-posedness of mKdV in $H^s(\T)$, $s \geq \frac{1}{2}$,
i.e. in addition we establish {\it unconditional} uniqueness in $C([0, T]; H^s)$, $s \geq 1/2$,
of solutions to mKdV. We prove this result via differentiation by parts.
For the endpoint case $s = \frac{1}{2}$,
we perform careful quinti- and septi-linear estimates after the second differentiation by parts.

\end{abstract}

\maketitle

\tableofcontents

\section{Introduction}

We consider the Cauchy problem for the modified Korteweg-de Vries (mKdV) equation
on the one-dimensional torus $\T = \R /2\pi \mathbb{Z}$:
\begin{align} \label{MKDV1}
\begin{cases}
\dt u = \dx^3 u \pm u^2 \dx u, \\
u\big|_{t = 0} = u_0,
\end{cases}
\quad (x, t) \in \T \times \R
\end{align}

\noi
where $u$ is a real-valued function.
The mKdV has received a great deal of attention
both from applied and theoretical fields
and is known to be completely integrable in the sense that
it enjoys the Lax pair structure and so infinitely many conservation laws.
In particular, if $u$ is a ``nice'' solution of \eqref{MKDV1},
then the $L^2$-norm is conserved.
i.e. $\|u(t)\|_{L^2} = \|u_0\|_{L^2}$.
Then, by the change of variables $x \to x\mp\mu t$ with $\mu = \frac{1}{2\pi}\|u_0\|^2_{L^2}$,
we can rewrite \eqref{MKDV1} as
\begin{align} \label{MKDV2}
\begin{cases}
\dt u = \dx^3 u \pm \Big(u^2 - \tfrac{1}{2\pi} \int_\T u^2\Big)\dx u, \\
u\big|_{t = 0} = u_0.
\end{cases}
\end{align}

\noi
In this paper, we study the unconditional local well-posedness of \eqref{MKDV2}.

Let us briefly go over recent results on the well-posedness theory
of the periodic mKdV.
In \cite{BO1}, Bourgain introduced a new weighted space-time Sobolev space $X^{s,b}$ (also known as {\it dispersive-Sobolev space}), whose norm
is given by
\begin{equation} \label{XSB}
 \|u\|_{X^{s, b}(\T\times\R)} = \|\jb{k}^s \jb{\tau+k^3}^b \ft{u}(k, \tau)\|_{l^2_k L^2_\tau(\mathbb{Z}\times\R)},
\end{equation}

\noi
where $\jb{ \, \cdot \, } = 1 +|\cdot|$.
By the fixed point argument in an appropriate $X^{s, b}$ space,
he proved that \eqref{MKDV2} is locally well-posed in $H^s(\T)$, $s\geq \frac{1}{2}$.
Then, Colliander-Keel-Staffilani-Takaoka-Tao \cite{CKSTT4}
proved global well-posedness in $H^s$, $s \geq \frac{1}{2}$, via the $I$-method.
We point out that the solution map $\mathcal{S}_t: u_0 \in H^s \mapsto u(t)\in H^s$
constructed in \cite{BO1, CKSTT4}
is smooth.
Indeed,
it was shown in \cite{BO2} that the solution map to \eqref{MKDV2}
can not be smooth in $H^s$ for $s < \frac{1}{2}$.
See \cite{CCT, TT} for related results.
Nonetheless, Takaoka-Tsutsumi \cite{TT} successfully modified the $X^{s, b}$ space
to reduce the nonlinear effect from the resonant term
(see $\RR$ in \eqref{v1} below) and proved local well-posedness in $H^s$, $s >\frac{3}{8}$.
Nakanishi-Takaoka-Tsutsumi \cite{NTT} further improved the result
and proved local well-posedness in $H^s \cup \mathcal{F}L^{\frac{1}{2}, \infty}$, $s> \frac{1}{3}$,
where $\|f\|_{\mathcal{F}L^{\frac{1}{2}, \infty}} = \sup_k \jb{k}^\frac{1}{2} |\ft{f}(k)| <\infty$.
(Existence alone holds in $H^s$ for $s>\frac{1}{4}$.)
Note that the solution map constructed in \cite{TT, NTT} is not uniformly continuous.
There is also a result using the complete integrability of the equation.
Kappeler-Topalov \cite{KT1} proved that
defocusing mKdV, \eqref{MKDV1} with the $-$ sign,
is globally well-posed in $L^2(\T)$ via the inverse spectral method.

Now, let us examine the uniqueness of solutions in the above results.
In \cite{BO1, TT}, the uniqueness (with prescribed $L^2$-norm) holds in (a ball in)
$C([0,T];H^s)\cap X$, where $X$ is an auxiliary function space,
i.e. only within the (modified) $X^{s, b}$ space.
Thus,  the uniqueness holds {\it conditionally},
since uniqueness may not hold without the restriction of the auxiliary space $X$.
In \cite{KT1},
the uniqueness holds in the class of solutions obtained by a limiting procedure of smooth solutions.

Recall the following definition from Kato \cite{KATO}.
We say that a Cauchy problem is {\it unconditionally well-posed}
in $H^s$
if for every initial condition $u_0 \in H^s$,
there exist $T>0$ and a {\it unique} solution $u \in C([0, T];H^s)$
such that $u(0) = u_0$.
Also, see \cite{FPT}.
We refer to such uniqueness in $C([0, T];H^s)$
without intersecting with any auxiliary function space as {\it unconditional uniqueness}.
Unconditional uniqueness is a concept of uniqueness which does not depend
on how solutions are constructed.
See, for example,  Zhou \cite{Z} for unconditional uniqueness of KdV in $L^2(\R)$\footnote{This is uniqueness of weak solutions in the space of $L^\infty_tL^2_x(\R\times\R)$} or Tao \cite{Ta}
for focusing mass-critical NLS with spherical symmetry in $L^2(\R^d)$, $d \geq 5$.

The main result of the paper is the following:
\begin{theorem} \label{thm1}
Let $s\geq \frac{1}{2}$.
Then,  mKdV is unconditionally locally well-posed
in $H^s(\T)$.
\end{theorem}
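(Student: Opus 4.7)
The plan is to accept existence in $C([0,T];H^s)$, $s\geq \tfrac12$, from Bourgain \cite{BO1} and concentrate entirely on unconditional uniqueness. Given two solutions $u_1,u_2\in C([0,T];H^s)$ of \eqref{MKDV2} with the same initial data, the target is a Lipschitz-in-difference estimate
\[ \|u_1-u_2\|_{L^\infty_T H^s}\leq C(R)\,T^{\theta}\,\|u_1-u_2\|_{L^\infty_T H^s},\qquad R=\max_j\|u_j\|_{L^\infty_T H^s},\ \theta>0,\]
which, after $T$ is taken small, forces $u_1\equiv u_2$ on a short interval and then on all of $[0,T]$ by iteration. The trilinear nonlinearity cannot be controlled by classical multilinear estimates using $L^\infty_t H^s_x$ alone, since rough $H^s$-solutions are not a priori in any $X^{s,b}$ space; the device that unlocks this is to \emph{differentiate by parts} in time on the interaction representation, trading the dispersive norm for pure $C_t H^s$ information.

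Concretely, I would pass to $v_k(t)=e^{-ik^3 t}\ft u(k,t)$, in which \eqref{MKDV2} becomes
\[ \dt v_k \;=\; \sum_{\substack{k_1+k_2+k_3=k\\ k\neq k_j\;(j=1,2,3)}} c\, k\, e^{it\Phi}\,v_{k_1}v_{k_2}v_{k_3},\qquad \Phi=3(k-k_1)(k-k_2)(k-k_3),\]
the $\mu$-correction in \eqref{MKDV2} having already absorbed the resonant contribution $k=k_j$ (equivalently $k_i+k_l=0$). Writing $e^{it\Phi}=\dt(e^{it\Phi})/(i\Phi)$ and integrating by parts in $t$ converts the cubic right-hand side into (i) a boundary term, (ii) a trilinear term with the improved multiplier $k/\Phi$, and (iii) a quintilinear remainder obtained by substituting the equation back into $\dt(v_{k_1}v_{k_2}v_{k_3})$. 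On the non-resonant set, the factorization of $\Phi$ as a product of three nonzero integers lets $k/\Phi$ absorb enough derivatives that a Cauchy--Schwarz case analysis over the frequency sum closes each of (i)--(iii) in $C_t H^s$ for $s>\tfrac12$, with a constant of the form $T^\theta P(\|v\|_{L^\infty_T H^s})$ for some polynomial $P$.

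The delicate point is the endpoint $s=\tfrac12$, at which the single differentiation by parts is only logarithmically insufficient on near-resonant configurations. I would remedy this by applying a \emph{second} differentiation by parts to the quintilinear remainder (iii): its new phase $\Phi'$, built from three of the five frequencies, is factored out of a fresh exponential and integrated by parts, producing an additional boundary term, a second-generation quintilinear term with multiplier $k/(\Phi\Phi')$, and a septilinear term after one more substitution of the equation. The main technical obstacle here is the bookkeeping of secondary resonances: $\Phi'$ itself introduces sub-resonant regions that must be identified and either cancelled (extending the $\mu$-type correction to higher multilinearity) or absorbed into fresh boundary terms. The resulting quinti- and septilinear estimates are then closed by case analysis on which pairwise sum dominates $|\Phi|$ and $|\Phi'|$, combined with Cauchy--Schwarz in each summation variable.

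Once these multilinear bounds are in place, the same differentiation-by-parts scheme applied to the difference equation -- which is trilinear in $w=u_1-u_2$ with coefficients depending on $u_1,u_2$ -- produces the desired Lipschitz estimate with constant $C(R)T^\theta$. Since $w(0)=0$, the boundary terms at $t=0$ vanish identically, and the estimate forces $w\equiv 0$ on a short time interval; iterating yields uniqueness on all of $[0,T]$ and thereby completes the proof of Theorem \ref{thm1}.
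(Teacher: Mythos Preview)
Your overall strategy --- interaction representation, differentiation by parts, then a second pass at $s=\tfrac12$ --- matches the paper's, but there is a genuine gap that would prevent the argument from closing for general (not small) data.

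The issue is the boundary term (i). You assert that all of (i)--(iii) close ``with a constant of the form $T^\theta P(\|v\|_{L^\infty_T H^s})$,'' but the boundary term carries no time integration and hence no $T^\theta$. In the difference estimate you correctly note that $w(0)=0$ kills the contribution at $t=0$; however the contribution at time $t$,
\[
\N_1(v_1)(t)-\N_1(v_2)(t),
\]
is still present, and the best multiplier bound one gets (your Cauchy--Schwarz case analysis on $k/\Phi$) yields only
\[
\|\N_1(v_1)-\N_1(v_2)\|_{H^s}\;\lesssim\;R^2\,\|v_1-v_2\|_{H^s},
\]
with an absolute implied constant and \emph{no} smallness parameter. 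Feeding this into your Lipschitz inequality gives
\[
\|w\|_{L^\infty_T H^s}\;\le\;C R^2\,\|w\|_{L^\infty_T H^s}\;+\;T^\theta(\cdots)\,\|w\|_{L^\infty_T H^s},
\]
which forces $w=0$ only when $CR^2<1$, i.e.\ for small data. This is exactly the obstruction the paper flags after \eqref{S2}: ``there is no small constant for $\N_1$.''

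The paper's remedy, which you are missing, is to introduce a frequency cutoff $n$ \emph{before} differentiating by parts: write $\N=\N^{(n)}+\N^{(-n)}$ where $\N^{(n)}$ restricts all input frequencies to $|k_j|\le n$, and apply DBP only to $\N^{(-n)}$. The low-frequency piece $\N^{(n)}$ is estimated crudely (Lemma~\ref{LEM:N1}, constant $\sim n\ln n$) but sits under the time integral; the boundary term $\N_1^{(-n)}$ now carries the restriction $k^*>n$, which produces a genuine small factor $n^{-\alpha}$ (Lemma~\ref{LEM:N3}). One then chooses $n$ large so that $Cn^{-\alpha}R^2<\tfrac12$, and afterwards $T$ small to beat the $n\ln n$. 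The same device is needed again at $s=\tfrac12$ for the second-generation boundary term $\N_7$ (Lemma~\ref{LEM:O5}).

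Two minor points: the $\mu$-correction does not remove the full resonant set; a diagonal term $\RR=-ik|v_k|^2 v_k$ survives and must be estimated separately (it is harmless, cf.\ Lemma~\ref{LEM:R}). Also, the difference equation is linear (not trilinear) in $w$, with coefficients quadratic in $u_1,u_2$; this does not affect the argument but is worth stating correctly.
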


\noi Our result provides another proof of the local well-posedness.
We think that this proof is more natural and elementary since we do not use any auxiliary function spaces
but only rely on simple {\it differentiation by parts} and {\it Cauchy-Schwarz inequality}. As a result, we can establish unconditional uniqueness of solution to mKdV in $H^{s}(\T)$, $s \geq\frac{1}{2}$, which is an improvement of Bourgain's result \cite{BO1} in the aspect of uniqueness.

\begin{remark}\rm
In the proof of Theorem~\ref{thm1}, we show existence and uniqueness of solutions to 
the renormalized mKdV \eqref{MKDV2} in $H^s(\T)$, $s\geq \frac{1}{2}$. From this, one can deduce existence and uniqueness of solutions to the original mKdV \eqref{MKDV1} in $H^s(\T)$, $s\geq \frac{1}{2}$. 
Indeed, for a given $u_0 \in H^s(\T)$ with $s \geq \frac{1}{2}$, 
a function  $u(x,t) \in L^\infty([0,T];H^s)$ is a solution to the original mKdV \eqref{MKDV1} with the initial condition $u_0$
if and only if 
$v$ is a solution to the renormalized mKdV \eqref{MKDV2} with the same initial condition $u_0$,
where $v$ is defined by
\begin{equation} \label{gauge}
v(x,t):= u\big(x\mp \frac{1}{2\pi} \int_0^t \|u(t')\|_{L^2}^2dt' ,t\big). 
\end{equation}  

\noi
(Here, we used the fact that $\|v(t)\|_{L^2} = \|u(t)\|_{L^2}$
for $v$ defined in \eqref{gauge}.)
 
Now, suppose that $u_1$ and $u_2$ are two solutions to the original mKdV \eqref{MKDV1} 
in $C([0, T]; H^s)$
with the same initial condition 
$u_0 \in H^s(\T)$ with $s \geq \frac{1}{2}$.
Then, $t \mapsto \|u_j(t) \|_{L^2}^2  $ is locally integrable and $v_1$ and $v_2$ defined via \eqref{gauge} are solutions to the renormalized mKdV \eqref{MKDV2} in $C([0, T]; H^s)$ with the same initial condition $u_0$. 
Hence, by Theorem \ref{thm1}, we have $v_1 = v_2$ in $C([0, T]; H^s)$. 
In particular, $\|v_j(t) \|_{L^2}^2 $, $j = 1, 2$, is constant in time.\footnote{For (unique) solutions to \eqref{MKDV2} 
in $C([0, T]; H^s)$, $s\geq \frac{1}{2}$, 
constructed in this paper, one can easily check that the $L^2$-norm is conserved in time.}
In view of \eqref{gauge}, 
we see that  $\|u_j(t)\|_{L^2}^2$, $j = 1, 2$, is also constant in time, and
the transformation \eqref{gauge} can be written as
\begin{equation} \label{gauge2} 
v_j(x,t)= u_j\big(x\mp \tfrac{t}{2\pi} \|u_0\|_{L^2}^2,t\big),
\quad \text{ for }j = 1, 2.
\end{equation}

\noi
Therefore, from (the inverse of) \eqref{gauge2} and $v_1 = v_2$ in $C([0, T]; H^s)$, 
 we obtain $u_1 = u_2$ in $C([0, T]; H^s)$. This shows unconditional uniqueness of the original mKdV \eqref{MKDV1}.

Lastly, we discuss the regularity of the solution map$: u_0 \mapsto u(t)$
of the original mKdV \eqref{MKDV1} (for sufficiently small $t$ depending on the size of initial data.) 
From the proof of Theorem \ref{thm1},
it follows that the solution map of the renormalized mKdV \eqref{MKDV2} is locally Lipschitz continuous.
Consequently,  this yields
 local Lipschitz continuity of the solution map of the original mKdV \eqref{MKDV1}
  in the class
 \begin{equation} \label{L2}
 \{u_0 \in H^s(\T) :  \|u_0\|_{L^2(\T)}=c   \}
 \end{equation}
 
 \noi
 with a fixed $c$. 
(Two initial data of distinct $L^2$-norms give rise to two different
renormalized mKdV \eqref{MKDV2}, and thus their solutions are not comparable.
In general, one can show that the uniform continuity of
the solution map of the original mKdV \eqref{MKDV1} fails without
prescribing the $L^2$-norm.)
\end{remark}

\begin{remark}\rm
Many of the unconditional uniqueness results
use some auxiliary function spaces (e.g. $X^{s, b}$ spaces in \cite{Z}, Strichartz spaces in \cite{Ta}),
which are designed to be large enough to contain $C([0, T];H^s)$
such that desired nonlinear estimates hold.
However,  we simply use
the $C([0, T];H^s)$-norm in the proof of Theorem \ref{thm1}.

We also point out that Theorem \ref{thm1} does {\it not} imply
unconditional well-posedness for KdV in $H^s(\T)$, $s \geq -\frac{1}{2}$
even under the Miura map.
Indeed, the issue of unconditional uniqueness of the periodic KdV is settled
in view of the non-uniqueness result by Christ \cite{CHRIST} for KdV in $C_tH^s(\T)$, $s < 0$,
and an (implicitly implied) positive result in $L^2(\T)$ by
Babin-Ilyin-Titi \cite{BIT}.
\end{remark}

Theorem \ref{thm1} with global well-posedness of mKdV in
$H^s$, $s \geq \frac{1}{2}$, by \cite{CKSTT4}
yields  the following corollary.

\begin{corollary} \label{cor1}
Let $s\geq \frac{1}{2}$.
Then,  mKdV is unconditionally globally well-posed
in $H^s(\T)$.
\end{corollary}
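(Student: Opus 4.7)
The strategy is to work on the Fourier side of \eqref{MKDV2} and replace the use of auxiliary $X^{s,b}$ spaces by \emph{differentiation by parts} in the time variable (a normal form reduction). Passing to the interaction representation $v_k(t) := e^{-ik^3 t}\ft{u}(k,t)$, equation \eqref{MKDV2} becomes, up to constants,
\begin{equation*}
\dt v_k(t) = \mp \frac{i}{2\pi^2}\sum_{k = k_1 + k_2 + k_3} k\, e^{it\Phi}\, v_{k_1}(t) v_{k_2}(t) v_{k_3}(t),
\end{equation*}
where the modulation $\Phi = k^3 - k_1^3 - k_2^3 - k_3^3$ factors as $\Phi = 3(k_1+k_2)(k_2+k_3)(k_3+k_1)$. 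The mean-zero renormalization performed in \eqref{MKDV2} precisely kills the contribution of the trivial resonances $k_i + k_j = 0$, leaving a non-resonant cubic sum $\mathcal{N}_3(v)$ indexed by $\Gamma^*(k) := \{(k_1,k_2,k_3) \colon k = k_1+k_2+k_3,\ k_i+k_j \neq 0 \text{ for all } i\neq j\}$, together with a triply-resonant self-interaction proportional to $|v_k|^2 v_k$ that is absorbed by a pointwise unitary gauge.

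On $\Gamma^*(k)$ one has $e^{it\Phi} = (i\Phi)^{-1}\dt e^{it\Phi}$, so integration by parts in time in the Duhamel formula produces a trilinear boundary term with symbol $k/\Phi$ together with a quintic remainder obtained by substituting the cubic equation back in for $\dt(v_{k_1} v_{k_2} v_{k_3})$. The algebraic gain $|k/\Phi| \lesssim 1/\max(|k_1|,|k_2|,|k_3|)$ supplied by the factorization of $\Phi$ plays the role that the weight $\jb{\tau+k^3}^{1/2}$ plays in \cite{BO1}.

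For $s > \tfrac{1}{2}$, elementary case analysis with Cauchy--Schwarz is expected to yield
\begin{equation*}
\|\mathcal{N}_3(v)\|_{H^s} \lesssim \|v\|_{H^s}^3, \qquad \|\mathcal{N}_5(v)\|_{H^s} \lesssim \|v\|_{H^s}^5,
\end{equation*}
from which a standard contraction argument gives local existence and Lipschitz dependence on data in $C([0,T];H^s)$. The decisive point for unconditional uniqueness is that this normal form identity is a purely algebraic rewriting of Duhamel's formula, valid for \emph{any} solution in $C([0,T];H^s)$ without reference to an auxiliary space. Taking differences of two such solutions and applying the analogous multilinear difference estimates yields a Gr\"onwall inequality for $\|u_1(t)-u_2(t)\|_{H^s}$, closing uniqueness directly in $C([0,T];H^s)$.

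The main obstacle is the endpoint $s = \tfrac{1}{2}$, where the single-derivative gain from $1/\Phi$ is just insufficient to bound $\mathcal{N}_5$ in $H^{1/2}$ by $\|v\|_{H^{1/2}}^5$. To treat it I would perform a \emph{second} normal form reduction: split $\mathcal{N}_5$ according to whether the new combined phase vanishes, integrate by parts once more on its non-resonant part, and replace it by a quintilinear boundary contribution plus a septilinear remainder. The bulk of the technical effort then goes into (i) identifying and reorganizing the second-generation resonances, some of which produce gauge-like terms or lower-order pieces rather than genuine loss of regularity, and (ii) establishing the resulting quintilinear and septilinear estimates at the critical scaling $s = \tfrac{1}{2}$, which must exploit the full factorized structure of both the original and the secondary resonance relations together with a delicate case analysis according to the relative sizes of the five (respectively seven) input frequencies.
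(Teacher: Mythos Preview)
Your proposal is aimed at the wrong statement. Corollary~\ref{cor1} asserts \emph{global} unconditional well-posedness; the paper derives it in one line by combining the unconditional \emph{local} result (Theorem~\ref{thm1}) with the global well-posedness of mKdV in $H^s(\T)$, $s\geq\tfrac12$, already established in \cite{CKSTT4} via the $I$-method. You never address the passage from local to global, so even a flawless execution of your outline would only recover Theorem~\ref{thm1}, not the corollary.

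Regarding your sketch of the local theory itself, two points diverge from what actually works. First, the trilinear boundary term you call $\mathcal{N}_3$ satisfies the bound $\|\mathcal{N}_3(v)\|_{H^s}\lesssim\|v\|_{H^s}^3$, but this carries no small constant (there is no factor of $T$), so it cannot be absorbed in a contraction. The paper fixes this by introducing a frequency cutoff $n$, separating $\N^{(n)}$ (bounded by $n\ln n\,\|v\|_{H^s}^3$, harmless under time integration) from $\N_1^{(-n)}$ (bounded by $n^{-\alpha}\|v\|_{H^s}^3$, small for large $n$); see Lemmata~\ref{LEM:N1} and~\ref{LEM:N3}. Second, at $s=\tfrac12$ you propose a direct second normal form using the combined phase $\Phi(\bar{k})+\Phi(\bar{\jmath})$. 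The paper explicitly notes that this fails because $\Phi(\bar{k})+\Phi(\bar{\jmath})$ has no useful factorization. Instead one rewrites the bad piece $\N_{222}^{(-n)}$ as a sum involving $\partial_t(\N_1)_{k_1}$ and $(\N_2)_{k_1}$ (see \eqref{O17}) and integrates by parts so that the time derivative moves from $\N_1$ onto $e^{it\Phi(\bar{k})}$ \emph{and} $v_{k_2}v_{k_3}$ simultaneously (see \eqref{O333}), producing the terms $\N_5,\N_6,\N_7$ with $\Phi(\bar{\jmath})$ rather than $\Phi(\bar{k})+\Phi(\bar{\jmath})$ in the denominator.
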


\noi
We prove Theorem \ref{thm1}
by establishing {\it a priori} estimates,
where we use only the $C_tH^s_x$-norm of solutions.
In the following, we briefly describe the idea of
differentiation by parts introduced in Babin-Ilyin-Titi \cite{BIT}.

Let $S(t) = e^{t\dx^3}$ denote the semigroup to the Airy equation
(= linear part of mKdV \eqref{MKDV1}.)
We apply a change of coordinates: $v(t) = S(-t) u(t)$.
In terms of the spatial Fourier coefficients, this can be written as
$v_k (t) = e^{ik^3t}u_k(t)$,
where $v_k(t)$ denotes the $k$-th (spatial) Fourier coefficient of $v(\cdot, t)$.
i.e. $v_k (t)= \ft{v}(k,t)$.
Working in terms of $v$ has certain advantages.
Ginibre \cite{G} says
``In the language of Quantum Mechanics, this consists in working in the so-called interaction representation.''
In \cite{BO1}, Bourgain made an effective use of this coordinate
by introducing the $X^{s, b}$ spaces.
From the definition \eqref{XSB}, we have
$\|u\|_{X^{s, b}} = \|v\|_{H^b_tH^s_x}$,
i.e. a function $u$ is in $X^{s, b}$
if and only if its interaction representation $v(t) = S(-t) u(t)$
is in the classical Sobolev space  $H^b_t H^s_x$.
A similar idea has been applied to study equations in hydrodynamics.
See \cite[Section 2]{BIT} for a nice discussion.


With $v (t) = S(-t)u(t)$,
it follows from \eqref{MKDV2}
(see \cite{BO1})
that $v$ satisfies\footnote{In the following, we only deal with the focusing case, i.e. with the $+$
sign in \eqref{MKDV1}, since our analysis is local-in-time
and thus the focusing/defocusing nature of the equation is irrelevant.}
\begin{align} \label{v1}
\dt v_k & = \sum_{\substack{k_1+ k_2+ k_3 = k\\k_1+k_2\ne0}} i k_3 e^{i t\Phi(\bar{k})} v_{k_1}v_{k_2}v_{k_3} \notag \\
& = \frac{i}{3}\sum_{\substack{k_1+ k_2+ k_3 = k\\ \Phi(\bar{k}) \ne 0}} k e^{i t\Phi(\bar{k})} v_{k_1}v_{k_2}v_{k_3}
- i k |v_k|^2 v_k
=: \N + \RR.
\end{align}

\noi
where $\Phi(\bar{k}) =  \Phi(k, k_1, k_2, k_3) := k^3 - k_1^3 - k_2^3 - k_3^3$.
With $k = k_1 + k_2+k_3$, we have
\begin{equation} \label{PHI}
\Phi(\bar{k}) = 3 (k_1 + k_2) (k_2+k_3) (k_3+k_1).
\end{equation}

\noi
In this framework, the usual Duhamel formulation of \eqref{MKDV2}
corresponds to
\begin{equation} \label{S1}
v(t) = v_0 + \int_0^t \N(v)(t') + \RR(v)(t') dt',
\end{equation}

\noi
where $\N(t')$ stands for $\N(v)(t') = \N\big(v(t'),v(t'),v(t')\big)$, and so on.
Due to the presence of $\dx$ in the nonlinearity,
a direct estimate in $H^{s}$ on the nonlinear part in \eqref{S1}
does not work.

Assume that $v$ is smooth in the following,
since our goal is to obtain a priori estimates on solutions.
As in \cite{BIT},
we can differentiate $\N$ by parts,
Then, we have
\begin{align} \label{v2}
\N_k & = \dt \bigg[\frac{i}{3}\sum_{\substack{k_1+ k_2+ k_3 = k\\\Phi(\bar{k}) \ne 0}}
\frac{k e^{i t\Phi(\bar{k})}}{i \Phi(\bar{k})} v_{k_1}v_{k_2}v_{k_3}\bigg] \notag\\
& \hphantom{X} - \frac{i}{3} \sum_{\substack{k_1+ k_2+ k_3 = k\\ \Phi(\bar{k}) \ne 0}}
\frac{k e^{i t\Phi(\bar{k})}}{i \Phi(\bar{k})}
( \dt v_{k_1}v_{k_2}v_{k_3}+v_{k_1}\dt v_{k_2}v_{k_3}+ v_{k_1}v_{k_2}\dt v_{k_3})\\
& =: \dt (\N_1)_k + (\N_2)_k. \notag
\end{align}

\noi
Note that this  corresponds to integration by parts on $\int \N(t) dt$.
With \eqref{v2}, we see that smooth solutions to \eqref{v2} satisfy
\begin{equation} \label{S2}
v(t) = v_0 + \N_1(t)-\N_1(0) + \int_0^t \N_2(t') + \RR(t') dt'.
\end{equation}

\noi
In \eqref{v2}, both terms have $\Phi(\bar{k}) (\ne 0)$ in the denominators,
and this provides smoothing.

%
%
%

Now, suppose that
we have $C_t H^s_x$ estimates on $\N_1$, $\N_2$, and $\RR$
in \eqref{S2}.
For $\N_2$ and $\RR$, we obtain smallness thanks to the time integration (for small $t$.)
However, there is no small constant for $\N_1$.
Thus, we can not close the argument to obtain a contraction.

In order to fix this problem, we use the idea from Section 6 in \cite{BIT}.
The idea is to separate the low frequency part of the non-resonant part $\N$
before differentiating by parts.
Let $v^{(n)} = P_n v$, where $P_n$ is the Dirichlet projection
onto the frequencies $\{\,|k| \leq n\}$.
Then, write $\N = \N^{(n)} + \N^{(-n)}$,
where $\N^{(n)}$ is given by
\begin{equation} \label{v4}
\N^{(n)}
= \frac{i}{3}\sum_{\substack{k_1+ k_2+ k_3 = k\\ \Phi(\bar{k}) \ne 0}} k e^{i t\Phi(\bar{k})}
v^{(n)}_{k_1}v^{(n)}_{k_2}v^{(n)}_{k_3}
\end{equation}

\noi
and $\N^{(-n)} = \N- \N^{(n)}  $.
Differentiating $\N^{(-n)}$ by parts,
we obtain
\begin{align} \label{v5}
\N^{(-n)} = \dt \big(\N_1^{(-n)}\big)+  \N_2^{(-n)},
\end{align}

\noi
where $\N_1^{(-n)}$ and $\N_2^{(-n)}$ are as in \eqref{v2}
with an extra condition
\begin{equation} \label{v55}
 k^* := \max (|k_1|,|k_2|,|k_3|) > n.
\end{equation}

\noi
Hence, smooth solutions to \eqref{v2} satisfy
\begin{align} \label{v6}
v(t) & =v_0+ \N^{(-n)}_{1}(t)-\N_{1}^{(-n)}(0)
+ \int_0^t \RR(t')
+ \N^{(n)}(t')+ \N_{2}^{(-n)}(t')dt'.
\end{align}

\noi
It turns out that \eqref{v55} provides a small constant
$n^{-\al}$ for some $\al>0$
in estimating $\N_{1}^{(-n)}$ (see Lemma \ref{LEM:N3} below),
and we can close the argument for $s > \frac{1}{2}$.

Unfortunately, this turns out not to be sufficient when $s = \frac{1}{2}$.
In particular, the estimate on $\N_{2}^{(-n)}$
fails when $ s= \frac{1}{2}$.
(See Lemma \ref{LEM:N2} below.)
Thus, we need to proceed one step further.
 We hoped to take a differentiation by parts once more.
 A direct differentiation by parts, however,  does not work because the corresponding resonance function
 $\Phi(\bar{k}) + \Phi(\bar{j})$ does not have a good factorization. (See \eqref{N23} below.) In this case, we restrict $\N_{2}^{(-n)}$ into a part and then perform differentiation by parts
once more, but in a slightly more complicated manner.
See \eqref{O17} and \eqref{O333}.
Namely, in \eqref{v2} and \cite{BIT},
we perform differentiation by parts to simply move the time derivative
from a complex exponential to a product of $v_{k_j}$.
However, in \eqref{O333},
we need to perform integration  by parts\footnote{Indeed, we keep \eqref{O333} in the form of
integration by parts to emphasize this point.} to move the time derivative
$\N_1 (=$ product of $e^{it\Phi(\bar{\jmath})}$ and $v_{j_1}v_{j_2}v_{j_3}$)
to $e^{it\Phi(\bar{k})}$ {\it and} $v_{k_2}v_{k_3}$,
which leads to further quinti- and septi-linear estimates.
See Section \ref{SEC:endpoint} for details.

Lastly, we point out that the restriction $s \geq \frac{1}{2}$
on the regularity
is due to the resonant term $\RR$ (see \cite[p.228]{BO1} and Lemma \ref{LEM:R}.)
As pointed out in \cite{TT}, if we define $v$ by
\begin{equation}\label{S4}
v_k (t)=  e^{i k^3 t +  i k \int_0^t |u_k(t')|^2 t' }u_k(t),
\end{equation}

\noi
then this would formally eliminate the resonant term.
However, it is difficult to make sense of this transformation for nonsmooth functions.
Instead, following \cite{TT, NTT}, one may try to use
\begin{equation}\label{S5}
v_k (t)=  e^{i k^3 t +  i k |u_k(0)|^2 t }u_k(t)
\end{equation}

\noi
as the first order approximation to \eqref{S4}
in order to weaken the nonlinear effect of the resonant term $\RR$.
For further improvement, one may consider the second order approximation:
$v_k (t)=  e^{i k^3 t +  i k (|u_k(0)|^2 t + \frac{1}{2} \dt|u_k(0)|^2 t^2) }u_k(t) $
or
higher order approximations.

\medskip

This paper is organized as follows.
In Section 2, we prove a priori estimates needed for $s > \frac{1}{2}$.
Then, we present the proof of Theorem \ref{thm1} for $s > \frac{1}{2}$ in Section 3.
In Section 4, we present the argument for the endpoint case $s = \frac{1}{2}$.

\medskip
\noindent
{\bf Acknowledgments:}
The authors would like to thank the anonymous referee for helpful suggestions.

\section{Nonlinear estimates for $s > \frac{1}{2}$}

In this section, we present nonlinear estimates controlling the terms in \eqref{v6}.
Without loss of generality, we assume that $v_k$ is nonnegative
in the following.

\begin{lemma} \label{LEM:R}
Let $\RR$ be as in \eqref{v1}.
Then, for any $s \in\R$, we have
\begin{align} \label{R1}
\|\RR(v)\|_{H^s} & \lesssim \|v\|^2_{H^{\frac{1}{2}}}\|v\|_{H^s}.
\end{align}

\noi
Also, for  $s \geq \frac{1}{2}$, we have
\begin{align}\label{R2}
\|\RR(v) - \RR(w) \|_{H^s}
& \lesssim \big(\|v\|_{H^s}+ \|w\|_{H^s}\big)^2 \|v-w\|_{H^s}.
\end{align}

\end{lemma}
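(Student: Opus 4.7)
The plan is to reduce both estimates to a pointwise (in $k$) application of Cauchy--Schwarz together with the elementary Sobolev embedding
\begin{equation*}
\jb{k}\, |v_k|^2 \leq \|v\|_{H^{1/2}}^2 \quad \text{for every } k \in \mathbb{Z}.
\end{equation*}
Since $\RR(v)_k = -ik|v_k|^2 v_k$ is a pointwise (not a convolution) operator on the Fourier side, there is no sum over frequencies $k_1+k_2+k_3=k$ to worry about, which is why Lemma \ref{LEM:R} is by far the simplest of the estimates.

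For \eqref{R1}, I would compute
\begin{equation*}
\|\RR(v)\|_{H^s}^2 = \sum_{k} \jb{k}^{2s} k^2 |v_k|^6,
\end{equation*}
extract a factor $k^2 |v_k|^4$, and dominate it by $\jb{k}^2 |v_k|^4 \leq \|v\|_{H^{1/2}}^4$ using the embedding above squared. What remains is $\sum_k \jb{k}^{2s} |v_k|^2 = \|v\|_{H^s}^2$, which yields \eqref{R1}. Notice that this works for every $s \in \mathbb{R}$ because the frequency weights $\jb{k}^{2s}$ are not touched during the extraction.

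For \eqref{R2}, I would start from the cubic telescoping identity
\begin{equation*}
|v_k|^2 v_k - |w_k|^2 w_k = |v_k|^2 (v_k - w_k) + w_k \bigl(|v_k|^2 - |w_k|^2\bigr),
\end{equation*}
which gives the pointwise bound $\bigl| |v_k|^2 v_k - |w_k|^2 w_k\bigr| \lesssim (|v_k|^2 + |w_k|^2)|v_k - w_k|$ (the sign simplification $v_k \geq 0$ assumed at the start of the section makes this immediate). Squaring, multiplying by $\jb{k}^{2s} k^2$, and applying the same embedding trick to the prefactor $\jb{k}^2 (|v_k|^2 + |w_k|^2)^2 \lesssim \|v\|_{H^{1/2}}^4 + \|w\|_{H^{1/2}}^4$, I obtain
\begin{equation*}
\|\RR(v) - \RR(w)\|_{H^s}^2 \lesssim \bigl(\|v\|_{H^{1/2}} + \|w\|_{H^{1/2}}\bigr)^4 \|v - w\|_{H^s}^2.
\end{equation*}
The hypothesis $s \geq 1/2$ is used exactly here, and only here, to upgrade the $H^{1/2}$ norms on the right-hand side to $H^s$ norms via $\|\cdot\|_{H^{1/2}} \leq \|\cdot\|_{H^s}$.

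There is no real obstacle: the only delicate ingredient is the pointwise Sobolev control of a single Fourier coefficient, and everything else is algebraic. The reason this resonant term forces the regularity threshold $s \geq 1/2$ globally (as mentioned by the authors) is visible already here: the exponent $1/2$ is precisely what makes $\jb{k}\,|v_k|^2$ uniformly bounded, and one cannot trade a derivative from $k$ (the $\dx$ inside $\RR$) for summability without losing this pointwise bound.
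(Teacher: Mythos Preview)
Your proof is correct and follows essentially the same approach as the paper: the paper also computes $\|\RR(v)\|_{H^s}^2 = \sum_k |k|^{2+2s} v_k^6$, pulls out the factor $\||k|^{1/2} v_k\|_{l^\infty_k}^2 \leq \|v\|_{H^{1/2}}^2$, and leaves $\|v\|_{H^s}$; for \eqref{R2} the paper simply says it ``follows in a similar manner,'' and your telescoping argument is exactly the natural way to carry this out.
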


\begin{proof}
We only prove \eqref{R1} since \eqref{R2} follows in a similar manner.
Clearly, we have
\begin{align*}
\|\RR(v)\|_{H^s}
= \bigg(\sum_k |k|^{2+ 2s} v_k^6 \bigg)^\frac{1}{2}
\leq \big\||k|^\frac{1}{2} v_k \big\|_{l^\infty_k}^2\|v\|_{H^s},
\end{align*}

\noi
which is bounded by RHS of \eqref{R1}.
\end{proof}

In the following, fix $n \in \mathbb{N}$.

\begin{lemma} \label{LEM:N1}
Let $\N^{(n)}$ be as in \eqref{v4}.
Then, for $s\geq \frac{1}{2}$, we have
\begin{align} \label{N11}
\|\N^{(n)}(v)\|_{H^s} & \lesssim n \ln n \, \|v\|_{H^s}^3 \\
\label{N12}
\|\N^{(n)}(v) - \N^{(n)}(w) \|_{H^s}
& \lesssim n \ln n\,  \big(\|v\|_{H^s}+ \|w\|_{H^s}\big)^2 \|v-w\|_{H^s}.
\end{align}

\begin{proof}
We only prove \eqref{N11} since \eqref{N12} follows in a similar manner.
Without loss of generality, assume $|k_1| \gtrsim |k|$.
Then, by $|k| \lesssim n $ and Young's inequality, we have
\begin{align*}
\|\N^{(n)}(v)\|_{H^s}
& \leq
\bigg(\sum_k |k|^{2+2s} \Big( \sum_{k_1+k_2+k_3=k}
v^{(n)}_{k_1}v^{(n)}_{k_2}v^{(n)}_{k_3} \Big)^2 \bigg)^\frac{1}{2}\\
& \lesssim n
\bigg(\sum_k  \Big( \sum_{k_1+k_2+k_3=k}
|k_1|^s v^{(n)}_{k_1}v^{(n)}_{k_2}v^{(n)}_{k_3} \Big)^2 \bigg)^\frac{1}{2}\\
& \leq n \|v^{(n)}\|_{H^s} \|v^{(n)}_{k_2}\|_{l^1_{k_2}} \|v^{(n)}_{k_3}\|_{l^1_{k_3}}
\lesssim n \ln n \, \|v^{(n)}\|_{H^s} \|v^{(n)}\|^2_{H^\frac{1}{2}}
\end{align*}

\noi
where we used Cauchy-Schwarz inequality and $|k_j|\leq n$ in the last step.
\end{proof}

\end{lemma}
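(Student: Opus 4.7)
My proof proposal is to exploit the Dirichlet projection $P_n$ in two complementary ways. First, since $k_1+k_2+k_3=k$ with each $|k_j|\le n$, the output frequency satisfies $|k|\lesssim n$, which provides a clean gain of $n$ from the derivative $k$ sitting in front of the cubic term. Second, by the symmetry in the three factors, I may assume $|k_1|=\max(|k_1|,|k_2|,|k_3|)$, hence $|k_1|\gtrsim |k|$, allowing me to place the remaining weight $|k|^s$ onto $v^{(n)}_{k_1}$. The unit-modulus phase $e^{it\Phi(\bar{k})}$ and the non-resonance restriction $\Phi(\bar{k})\ne 0$ are harmless and are discarded by passing to absolute values.

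After these reductions, the inner sum is a convolution $(|\cdot|^s v^{(n)})\ast v^{(n)} \ast v^{(n)}$ on $\Z$, and I would apply Young's inequality in the form $\ell^2 \ast \ell^1 \ast \ell^1 \hookrightarrow \ell^2$ to obtain
\[
\|\N^{(n)}(v)\|_{H^s} \lesssim n\, \|v^{(n)}\|_{H^s}\, \|v^{(n)}_k\|_{\ell^1_k}^{2}.
\]
The only point at which the argument comes close to failing is the $\ell^1$ bound: Cauchy-Schwarz restricted to the support $|k|\le n$ yields
\[
\|v^{(n)}_k\|_{\ell^1_k} \le \Big(\sum_{|k|\le n}\langle k\rangle^{-1}\Big)^{1/2}\|v\|_{H^{1/2}} \lesssim (\ln n)^{1/2}\|v\|_{H^{1/2}},
\]
which is precisely where the logarithmic loss enters. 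Using $\|v\|_{H^{1/2}}\le \|v\|_{H^s}$ for $s\ge \tfrac{1}{2}$, this produces the claimed bound $\|\N^{(n)}(v)\|_{H^s}\lesssim n\ln n\,\|v\|_{H^s}^3$.

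For the Lipschitz difference \eqref{N12}, I would apply the same scheme to the telescoping identity
\[
v_1 v_2 v_3 - w_1 w_2 w_3 = (v_1-w_1)v_2 v_3 + w_1(v_2-w_2)v_3 + w_1 w_2(v_3-w_3),
\]
placing the $H^s$ norm on whichever factor inherits the weight $|k_1|^s$ and absorbing the remaining two factors in $\ell^1$ as above. I do not anticipate any genuine obstacle in this lemma; the factor $\ln n$ is the sole borderline feature, and it will ultimately be harmless because the dual estimate on $\N_1^{(-n)}$ supplies a smoothing gain $n^{-\alpha}$ coming from $\Phi(\bar{k})^{-1}$ that comfortably dominates it.
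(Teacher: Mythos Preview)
Your proposal is correct and follows essentially the same route as the paper: use $|k|\lesssim n$ to extract the factor $n$, place $|k|^s$ on the largest frequency, apply Young's inequality $\ell^2*\ell^1*\ell^1\hookrightarrow\ell^2$, and then Cauchy--Schwarz on the support $|k|\le n$ to pick up the $(\ln n)^{1/2}$ from each $\ell^1$ factor. The treatment of the difference estimate via telescoping is likewise the same idea as the paper's.
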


Recall the following \cite[(8.21), (8.22)]{BO1}:
Suppose $\Phi(\bar{k}) \ne 0$ when $k = k_1 + k_2 + k_3$.
Then,  we have the following two possibilities:
\begin{itemize}
\item[(a)] With $ k^* = \max (|k_1|, |k_2|, |k_3|)$,
\begin{align}
 |\Phi (k)|
&  \geq
 \max (|k_1 + k_2||k_2 + k_3|,|k_2 + k_3||k_3 + k_1|,|k_3 + k_1||k_1 + k_2|) \notag \\
& \gtrsim
(k^*)^2, \quad \label{PHI1}
\end{align}

\noi
In this case, we have $ |\Phi (k)| \gtrsim (k^*)^2 \ld$,
where
\begin{align} \ld = \ld_k:= \min(|k_1 + k_2|,|k_2 + k_3|,|k_3 + k_1|).     \label{LD1}
\end{align}

\item[(b)] $|k_1|\sim |k_2|\sim|k_3| \sim k^*$ \ and
\begin{align}
 |\Phi (k)| \geq &\max (|k_1 + k_2|,|k_2 + k_3|,|k_3 + k_1|)
 \gtrsim k^*. \label{PHI2}
\end{align}

\noi
In this case, we have
$ |\Phi (k)| \gtrsim k^*\Ld $,
where
\begin{align}
\label{LD2}
 \Ld = \Ld_k := \min &(|k_1 + k_2||k_2 + k_3|,|k_2 + k_3||k_3 + k_1|,|k_3 + k_1||k_1 + k_2|).
\end{align}
\end{itemize}

\begin{lemma} \label{LEM:N2}
Let $\N_2^{(-n)}$ be as in \eqref{v5}.
Then, for $s> \frac{1}{2}$, we have
\begin{align} \label{N21}
\|\N_2^{(-n)}(v)\|_{H^s} & \lesssim  \, \|v\|_{H^s}^5 \\
\label{N22}
\|\N_2^{(-n)}(v) - \N_2^{(-n)}(w) \|_{H^s}
& \lesssim   \big(\|v\|_{H^s}+ \|w\|_{H^s}\big)^4 \|v-w\|_{H^s}.
\end{align}

\noi
The same estimates hold for $\N_2$ in \eqref{v2}.
\end{lemma}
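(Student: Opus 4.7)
The plan is to substitute the equation \eqref{v1} for $\dt v_{k_j}$ into the definition of $\N_2^{(-n)}$, converting $\N_2^{(-n)}(v)$ into a genuinely quintic functional of $v$, and then to bound each resulting piece in $H^s$ using the smoothing from $\Phi(\bar{k})$ in the denominator, Cauchy--Schwarz in several variables, and the embedding $\|v_k\|_{\ell^1_k}\lesssim \|v\|_{H^{1/2+\eps}}$, which holds precisely for $s>\tfrac12$.

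By the symmetry of $v_{k_1}v_{k_2}v_{k_3}$ in $(k_1,k_2,k_3)$, I reduce to the single term where $\dt$ hits $v_{k_1}$, and further assume (again by symmetry, paying a factor of $3$) that $|k_1|=k^*$. I then split $\dt v_{k_1}=\N_{k_1}+\RR_{k_1}$ via \eqref{v1}. The resonant piece, coming from $\RR_{k_1}=-ik_1|v_{k_1}|^2 v_{k_1}$, produces
\[
\sum_{\substack{k_1+k_2+k_3=k\\ \Phi(\bar{k})\ne 0,\ k^*>n}} \frac{k\,k_1}{\Phi(\bar{k})}\, e^{it\Phi(\bar{k})}\, |v_{k_1}|^2 v_{k_1}v_{k_2}v_{k_3}.
\]
By \eqref{PHI1}--\eqref{PHI2} one has $|k\,k_1|/|\Phi(\bar{k})|\lesssim 1$. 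Placing $\jb{k}^s$ on the top-frequency factor $v_{k_1}$ (since $|k|\lesssim|k_1|=k^*$), applying Cauchy--Schwarz in $(k_2,k_3)$, and invoking $\|v_k\|_{\ell^1_k}\lesssim\|v\|_{H^s}$ (valid for $s>\tfrac12$), yields a bound by $\|v\|_{H^s}^5$.

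The main term, from $\N_{k_1}$, is the quintic double sum
\[
\sum_{\substack{k_1+k_2+k_3=k\\ \Phi(\bar{k})\ne 0,\ k^*>n}}\ \sum_{\substack{\ell_1+\ell_2+\ell_3=k_1\\ \Phi(\bar{\ell})\ne 0}} \frac{k\,k_1}{\Phi(\bar{k})}\, e^{it(\Phi(\bar{k})+\Phi(\bar{\ell}))}\, v_{\ell_1}v_{\ell_2}v_{\ell_3}v_{k_2}v_{k_3}.
\]
Here I use \eqref{PHI1}--\eqref{PHI2} to bound $|k\,k_1|/|\Phi(\bar{k})|$ by $\min(1,\ld_k^{-1},\Ld_k^{-1})$ according to the case; place $\jb{k}^s$ on the largest of $\{|\ell_1|,|\ell_2|,|\ell_3|\}$ (which is $\gtrsim|k_1|\gtrsim|k|$); apply Cauchy--Schwarz in the inner $\ell$-sum to extract two $\ell^1_\ell$-norms, and Cauchy--Schwarz in the outer $(k_2,k_3)$-sum to extract two further $\ell^1$-norms; and dominate each $\ell^1$-norm by $\|v\|_{H^s}$ using $s>\tfrac12$. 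The main obstacle is case (b) of \eqref{PHI2}, where $|k_1|\sim|k_2|\sim|k_3|\sim k^*$ and $|\Phi(\bar{k})|$ is only $\gtrsim k^*\Ld_k$: one cannot simply drop $|k\,k_1|\sim (k^*)^2$, so one must exploit the $\Ld_k$-factor via the factorization \eqref{PHI}, summing one of the outer variables against $\Ld_k^{-1}$ or $\ld_k^{-1}$ through Cauchy--Schwarz. This is precisely the place where the endpoint $s=\tfrac12$ breaks down, through a logarithmic divergence of $\sum_k\jb{k}^{-1}$, consistent with the paper's comment that \eqref{N21} fails at $s=\tfrac12$.

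Finally, \eqref{N22} follows from \eqref{N21} by multilinearity: $\N_2^{(-n)}$ is a $5$-linear operator in $v$, so $\N_2^{(-n)}(v)-\N_2^{(-n)}(w)$ decomposes into five $5$-linear expressions, each with one factor of $v-w$ and four factors from $\{v,w\}$, each handled as in \eqref{N21}. The assertion for $\N_2$ in \eqref{v2} (without the restriction $k^*>n$) follows verbatim, since the threshold $n$ is never used above; only $\Phi(\bar{k})\ne 0$ is invoked.
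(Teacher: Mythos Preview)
Your decomposition into the resonant contribution (from $\RR_{k_1}$) and the non-resonant contribution (from $\N_{k_1}$) coincides with the paper's $\N_{21}^{(-n)}$ and $\N_{22}^{(-n)}$ in \eqref{N23}, and the overall strategy is right. However, there is a concrete gap in the treatment of case (b) from \eqref{PHI2}. For the resonant piece you assert $|k\,k_1|/|\Phi(\bar{k})|\lesssim 1$, but in case (b) only $|\Phi(\bar{k})|\gtrsim k^*\Ld$ holds while $|k\,k_1|$ can be of order $(k^*)^2$: for instance $k_1=k_3=N$, $k_2=-N+1$ gives $|\Phi|=6N$ but $|k\,k_1|=N(N+1)$, so the ratio is $\sim N$. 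The same issue invalidates your claim that $|k\,k_1|/|\Phi(\bar{k})|$ is bounded by $\Ld_k^{-1}$ in case (b) for the non-resonant piece; the correct bound is only $k^*/\Ld_k$. You do then acknowledge that in case (b) ``one cannot simply drop $|k\,k_1|\sim (k^*)^2$'', but the remedy you sketch (``summing one of the outer variables against $\Ld_k^{-1}$'') is too vague to close the estimate: one still has to absorb the leftover factor $k^*$, and your $\ell^1$-embedding scheme, which places the $H^s$ weight on a single factor and handles the rest in $\ell^1$, does not exploit that in case (b) \emph{all} of $|k_2|,|k_3|,|j_1|$ are $\gtrsim k^*$.

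The paper resolves both cases uniformly via duality and a Cauchy--Schwarz reduction to bounding
$\mathcal{M}_2=\big(\sum_k\sum M_2^2\,z_k^2\big)^{1/2}$ with
$M_2=|k|^{1+s}|k_1|\big/\big(|\Phi(\bar{k})|\,|j_1|^s|j_2|^s|j_3|^s|k_2|^s|k_3|^s\big)$.
In case (b) this yields $M_2^2\lesssim |k_1|^2\big/\big((k^*)^{2s}|j_1|^{2s}|j_2|^{2s}|j_3|^{2s}\Ld^2\big)\lesssim |j_2|^{-2s}|j_3|^{-2s}\Ld^{-2}$ for $s\geq\tfrac12$, since $|k_2|^{2s}|k_3|^{2s}|j_1|^{2s}\gtrsim (k^*)^{4s}$ absorbs $|k_1|^2\sim(k^*)^2$; one then sums $k_2,k_3$ via $\Ld^{-2}$ and $j_2,j_3$ via $|j_2|^{-2s}|j_3|^{-2s}$. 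Incidentally, the endpoint failure at $s=\tfrac12$ is not confined to case (b) as your final sentence suggests: in the paper's Case 2.a (i.e.\ case (a)) one has $M_2^2\lesssim |j_2|^{-2s}|j_3|^{-2s}|k_2|^{-2s}|k_3|^{-2s}\ld^{-2}$, and when $\ld=|k_2+k_3|$ this factor gives no help, forcing the same logarithmic divergence at $s=\tfrac12$.
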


\begin{proof}
We only prove \eqref{N21} since \eqref{N22} follows in a similar manner.
From \eqref{v1} and \eqref{v2}, we can separate $\N_2^{(-n)}$ into two parts:
\begin{align} \label{N23}
\big(\N_2^{(-n)}\big)_k & = -\sum_{\substack{k_1+ k_2+ k_3 = k\\\Phi(\bar{k}) \ne 0 \\k^* > n}}
\frac{k k_1e^{i t\Phi(\bar{k})}}{i \Phi(\bar{k})} |v_{k_1}|^2 v_{k_1}v_{k_2}v_{k_3} \notag \\
& + \frac{1}{3}\sum_{\substack{k_1+ k_2+ k_3 = k\\
j_1 + j_2 +j_3 = k_1 \\ \Phi(\bar{k}), \Phi(\bar{\jmath}) \ne 0\\k^* > n}}
\frac{k k_1e^{i t(\Phi(\bar{k})+\Phi(\bar{\jmath}))}}{i \Phi(\bar{k})} v_{j_1}v_{j_2}v_{j_3}v_{k_2}v_{k_3}
=: \big(\N_{21}^{(-n)}\big)_k+\big(\N_{22}^{(-n)}\big)_k,
\end{align}

\noi
where $\Phi(\bar{\jmath}) :=  \Phi(k_1, j_1, j_2, j_3)$.

\medskip
\noi
$\bullet$ {\bf Part 1:}
First, we estimate $\N_{21}^{(-n)}$.
By duality, it suffices to prove
\begin{align} \label{N24}
 \sum_k \sum_{\substack{k_1+ k_2+ k_3 = k\\\Phi(\bar{k}) \ne 0 \\k^* > n}}
M_1 |u_{k_1}|^2 u_{k_1}u_{k_2}u_{k_3} z_k
\lesssim \|u\|_{L^2}^5
\end{align}

\noi
where $\|z\|_{L^2} = 1$ and $M_1 $ is given by
\[ M_1 = M_1(k_1, k_2, k_3) : = \frac{|k|^{1+s} |k_1|}{|\Phi(\bar{k})||k_1|^{3s} |k_2|^s|k_3|^s}.\]

\noi
By Cauchy-Schwarz inequality, we have
\begin{align*}
\text{LHS of }\eqref{N24}
& \leq \bigg(
\sum_{k_1, k_2, k_3}
|u_{k_1}|^6 |u_{k_2}|^2|u_{k_3}|^2\bigg)^\frac{1}{2}
\bigg( \sum_k \sum_{k_1 + k_2 +k_3 = k}   M_1^2 z_k^2\bigg)^\frac{1}{2}\\
& \leq
\mathcal{M}_1
\|u\|_{L^2}^5,
\end{align*}

\noi
where $\mathcal{M}_1 = \big( \sum_k \sum_{k_1 + k_2 +k_3 = k}   M_1^2 z_k^2\big)^\frac{1}{2}$.

\medskip
\noi
$\circ$ Case 1.a:  $\Phi(\bar{k})$  satisfies \eqref{PHI1}.
In this case, we have
\begin{align*}
M_1^2 \sim \frac{1}{(k^*)^{2-2s} |k_1|^{6s-2}|k_2|^{2s}|k_3|^{2s}
\ld^2}
\leq \frac{1}{(k_*)^{8s} \ld^2}
\end{align*}
\noi
where $k_* = \min (|k_1|, |k_2|, |k_3|)$.
Thus, for $ s > \frac{1}{8}$, we have
$\mathcal{M}_1 \lesssim 1$
by summing over $k_i \, ( \ne k_*)$ in $\ld^{-2}$,
$k_*$ for $(k_*)^{-8s}$, and then $k$ for $z_k^2$.
Hence, \eqref{N24} holds for $s > \frac{1}{8}$.

\medskip
\noi
$\circ$ Case 1.b:  $\Phi(\bar{k})$  satisfies \eqref{PHI2}.
In this case, we have
\begin{align*}
M_1^2 \sim \frac{1}{(k^*)^{8s-2}
\Ld^2}
\leq \frac{1}{(k_*)^{8s-2} \Ld^2}.
\end{align*}

\noi
Thus, for $ s \geq \frac{1}{4}$, we have
$\mathcal{M}_1 \lesssim 1$
by summing over two frequencies in $\Ld^{-2}$,
 and then $k$ for $z_k^2$.
 Hence, \eqref{N24} holds for $s \geq \frac{1}{4}$.

\medskip
Therefore,  the estimates \eqref{N21} and \eqref{N22} for $\mathcal{N}^{(-n)}_{21}$ hold as long as  $s\ge \frac 14$.

\medskip
\noi
$\bullet$ {\bf Part 2:}
Next, we estimate $\N_{22}^{(-n)}$.
By duality, it suffices to prove
\begin{align} \label{N25}
 \sum_k \sum_{\substack{k_1+ k_2+ k_3 = k\\
j_1 + j_2 +j_3 = k_1 \\ \Phi(\bar{k}) \ne 0\\k^* > n}}
 M_2 u_{j_1}u_{j_2}u_{j_3} u_{k_2}u_{k_3} z_k
\lesssim \|u\|_{L^2}^5
\end{align}

\noi
where $\|z\|_{L^2} = 1$ and $M_2 $ is given by
\[ M_2 = M_2(j_1, j_2, j_3, k_2, k_3)
: = \frac{|k|^{1+s} |k_1|}{|\Phi(\bar{k})||j_1|^s|j_2|^s|j_3|^s |k_2|^s|k_3|^s}.\]

\noi
By Cauchy-Schwarz inequality, we have
\begin{align*}
\text{LHS of }\eqref{N25}
& \leq \bigg(
\sum_{j_1,j_2, j_3, k_2, k_3}
|u_{j_1}|^2|u_{j_2}|^2|u_{j_3}|^2 |u_{k_2}|^2|u_{k_3}|^2\bigg)^\frac{1}{2}
\bigg( \sum_k
\sum_{\substack{k_1+ k_2+ k_3 = k\\
j_1 + j_2 +j_3 = k_1}}
M_2^2 z_k^2\bigg)^\frac{1}{2}\\
& \leq
\mathcal{M}_2
\|u\|_{L^2}^5,
\end{align*}

\noi
where $\mathcal{M}_2 =
\big( \sum_k
\sum_{\substack{k_1+ k_2+ k_3 = k\\
j_1 + j_2 +j_3 = k_1}}
M_2^2 z_k^2\big)^\frac{1}{2}$.
Without loss of generality, assume
\begin{align}\label{Z1}
|j_1| = \max (|j_1|, |j_2|, |j_3|)
\gtrsim |k_1|.\\
\intertext{Also, we assume}
|k_1| = \max (|k_1|, |k_2|, |k_3|) \label{Z2}
\end{align}

\noi
in the following, since this corresponds to the worst case.

\medskip
\noi
$\circ$ Case 2.a:  $\Phi(\bar{k})$  satisfies \eqref{PHI1}.
In this case, we have
\begin{align} \label{N26}
M_2^2 \lesssim \frac{1}{|j_2|^{2s}|j_3|^{2s} |k_2|^{2s}|k_3|^{2s} \ld^2},
\end{align}

\noi
where $\ld$ is as in \eqref{LD1}.
(Note that $\ld$ is of no help if $\ld = |k_2 + k_3|$ and $|k_1| \gg |j_2|, |j_3|\gg |k_2|, |k_3|$.)
Thus, for $ s >\frac{1}{2}$, we have
$\mathcal{M}_2 \lesssim 1$
by summing over $j_2, j_3, k_2, k_3$,
 and then $k$ for $z_k^2$.
 Hence, \eqref{N25} holds for $s > \frac{1}{2}$.

\medskip
\noi
$\circ$ Case 2.b: $\Phi(\bar{k})$ satisfies \eqref{PHI2}.
In this case, we have $ k^* \sim |k_1| \sim |k_2|\sim |k_3| \gtrsim |k|$.
Then, for $s \geq \frac{1}{2}$,
we have
\begin{align} \label{N27}
M_2^2 \lesssim \frac{|k_1|^2 }{(k^*)^{2s} |j_1|^{2s}|j_2|^{2s}|j_3|^{2s}\Ld^2}
\lesssim \frac{1}{ |j_2|^{2s}|j_3|^{2s}\Ld^2},
\end{align}

\noi
where $\Ld$ is as in \eqref{LD2}.
Thus, we have
$\mathcal{M}_2 \lesssim 1$
by summing over $k_2, k_3$ for $\Ld^{-2}$,
$j_2, j_3$,
 and then $k$ for $z_k^2$.
 Hence, \eqref{N25} holds for $s > \frac{1}{2}$.
\end{proof}

\begin{lemma} \label{LEM:N3}
Let $\N_1^{(-n)}$ be as in \eqref{v5}.
Then, for $s>0$, there exists $\al > 0$ such that
\begin{align} \label{N31}
\|\N_1^{(-n)}(v)\|_{H^s} & \lesssim  n^{-\al } \|v\|_{H^s}^3 \\
\label{N32}
\|\N_1^{(-n)}(v) - \N_1^{(-n)}(w) \|_{H^s}
& \lesssim   n^{-\al } \big(\|v\|_{H^s}+ \|w\|_{H^s}\big)^2 \|v-w\|_{H^s}.
\end{align}
\end{lemma}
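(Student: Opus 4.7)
The plan is to adapt the duality-plus-Cauchy--Schwarz argument of Lemma \ref{LEM:N2}, extracting the extra factor $n^{-\alpha}$ from the frequency constraint $k^* > n$ in \eqref{v55}. Substituting $u_{k_j} := |k_j|^s v_{k_j}$ and dualizing, I would reduce \eqref{N31} to
\[
\sum_k \sum_{\substack{k_1+k_2+k_3=k\\ \Phi(\bar k)\ne 0,\, k^*>n}} M\, u_{k_1} u_{k_2} u_{k_3} z_k \lesssim n^{-\alpha}\|u\|_{l^2}^3,
\]
with $\|z\|_{l^2}=1$ and $M := \frac{|k|^{1+s}}{|\Phi(\bar k)|\, |k_1|^s |k_2|^s |k_3|^s}$. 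Cauchy--Schwarz, grouping $M z_k$ against $u_{k_1}u_{k_2}u_{k_3}$, further reduces the problem to the weighted bound $\sup_k \sum_{k_1+k_2+k_3=k,\, k^*>n} M^2 \lesssim n^{-2\alpha}$.

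Next I would split the sum into the two regimes of \eqref{PHI1}--\eqref{PHI2}. By symmetry assume $k^*=|k_1|$, so that $|k|\lesssim k^*$. In case~(a), $|\Phi|\gtrsim (k^*)^2\lambda$ yields
\[
M^2 \lesssim \frac{1}{(k^*)^2 \lambda^2 |k_2|^{2s}|k_3|^{2s}};
\]
peeling off $(k^*)^{-2\alpha} < n^{-2\alpha}$, the remaining task is the uniform-in-$k$ estimate
\[
\sum_{\substack{k_1+k_2+k_3=k \\ |k_1|>n}} \frac{1}{|k_1|^{2-2\alpha}\lambda^2 |k_2|^{2s}|k_3|^{2s}} \lesssim 1.
\]
In case~(b) all $|k_j|\sim k^*$, so $M^2 \lesssim \frac{1}{(k^*)^{4s}\Lambda^2}$; choosing $\alpha\leq 2s$ gives $(k^*)^{-4s}\leq n^{-2\alpha}$, and $\Lambda^{-2}$, being a product of the two smallest pairings squared, is summable over two free frequencies.

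The technical heart is the residual sum in case~(a), which I would handle by splitting according to which pairing realizes $\lambda = \min(|k_1+k_2|,|k_2+k_3|,|k_1+k_3|)$. When $\lambda\in\{|k_1+k_2|, |k_1+k_3|\}$, summing $\lambda^{-2}$ over the appropriate free variable costs only $O(1)$, leaving the $k_1$-tail $\sum_{|k_1|>n}|k_1|^{-2+2\alpha}$, which converges for $\alpha<\tfrac{1}{2}$. When $\lambda=|k_2+k_3|=|k-k_1|$ is independent of the summation variable $k_2$, I would use the max-frequency constraint $|k_2|, |k_3|\leq k^*=|k_1|$ to bound $\sum_{|k_2|\leq |k_1|}|k_2|^{-2s}|k-k_1-k_2|^{-2s}$ by $O(|k_1|^{\max(0,1-2s)+\varepsilon})$, then absorb this polynomial growth into the slack $|k_1|^{-2+2\alpha}$ by choosing $\alpha$ small relative to $s$.

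The Lipschitz estimate \eqref{N32} then follows from the telescoping identity $v_1v_2v_3 - w_1w_2w_3 = (v_1-w_1)v_2v_3 + w_1(v_2-w_2)v_3 + w_1w_2(v_3-w_3)$ together with the same trilinear bound on the resulting multiplier. The main obstacle I anticipate is precisely the subcase $\lambda=|k_2+k_3|$ for small $s$: there $\lambda$ provides no decay in the summation variable, so both the $n^{-\alpha}$ smallness and the convergence of the residual sum must be squeezed from the single slack factor $|k_1|^{-2+2\alpha}$, which forces the exponent $\alpha$ to be chosen as a function of $s$.
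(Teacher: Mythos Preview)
Your proposal is correct and follows essentially the same strategy as the paper: duality, Cauchy--Schwarz to reduce to a multiplier bound, and the split into the regimes \eqref{PHI1}--\eqref{PHI2}. The only noteworthy difference is in the \eqref{PHI1} case. You peel off $(k^*)^{-2\alpha}$ and then perform a further subcase analysis according to which pairing realizes $\lambda$, handling the awkward subcase $\lambda=|k_2+k_3|$ by a crude counting bound on the $k_2$-sum; this forces $\alpha$ to depend on $s$. The paper instead writes
\[
\frac{1}{(k^*)^2|k_2|^{2s}|k_3|^{2s}\lambda^2}\ \leq\ \frac{1}{n\,|k_2|^{1+2s}\lambda^2},
\]
using $k^*>n$ and $k^*\geq |k_2|$, and then observes that whichever pairing realizes $\lambda$, it necessarily involves at least one index $\ne k_2$; hence one can always sum $\lambda^{-2}$ over that index and then sum $|k_2|^{-1-2s}$ over $k_2$. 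This avoids the subcase split entirely and yields $\alpha=\tfrac12$ in Case~(a) independently of $s$. Both arguments are valid; the paper's is slightly cleaner.
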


\begin{proof}
We only prove \eqref{N31} since \eqref{N32} follows in a similar manner.
Recall that
\[ \N_1^{(-n)} =
\frac{i}{3}\sum_{\substack{k_1+ k_2+ k_3 = k\\\Phi(\bar{k}) \ne 0\\k^* >  n}}
\frac{k e^{i t\Phi(\bar{k})}}{i \Phi(\bar{k})} v_{k_1}v_{k_2}v_{k_3}.\]

\noi
By duality, it suffices to prove
\begin{align} \label{N33}
 \sum_k \sum_{\substack{k_1+ k_2+ k_3 = k\\\Phi(\bar{k}) \ne 0 \\k^* > n}}
M_3 u_{k_1}u_{k_2}u_{k_3} z_k
\lesssim n^{-\al} \|u\|_{L^2}^3
\end{align}

\noi
where $\|z\|_{L^2} = 1$ and $M_3 $ is given by
\[ M_3 = M_3(k_1, k_2, k_3) : = \frac{|k|^{1+s} }{|\Phi(\bar{k})||k_1|^{s} |k_2|^s|k_3|^s}.\]

\noi
As before, by Cauchy-Schwarz inequality, we have
\begin{align*}
\text{LHS of }\eqref{N33}
& \leq
\mathcal{M}_3
\|u\|_{L^2}^3,
\end{align*}

\noi
where $\mathcal{M}_3 = \big( \sum_k \sum_{k_1 + k_2 +k_3 = k}   M_3^2 z_k^2\big)^\frac{1}{2}$.
Without loss of generality, assume $k^* = |k_1|$.

\medskip
\noi
$\circ$ Case 1:  $\Phi(\bar{k})$  satisfies \eqref{PHI1}.
In this case, we have
\[ M_3^2 \lesssim \frac{1}{(k^*)^2 |k_2|^{2s}|k_3|^{2s} \ld^2}
\leq n^{-1}\frac{1}{|k_2|^{1+ 2s} \ld^2}, \]

\noi
where $\ld$ is as in \eqref{LD1}.
Thus, for $ s > 0 $, we have
$\mathcal{M}_3 \lesssim n^{-1}$
by summing over $k_j (\ne k_2)$ appearing in $\ld^{-2}$,
$k_2$,
 and then $k$ for $z_k^2$.
 Hence, \eqref{N33} holds for $s > 0$.

\medskip
\noi
$\circ$ Case 2:  $\Phi(\bar{k})$  satisfies \eqref{PHI2}.
For $\Ld$  as in \eqref{LD2},
we have $ \max(|k_2|, |k_3|, \Ld) \gtrsim |k_1|>n$,
since we have $|k_2|\sim |k_1|$ or $|k_3|\sim |k_1|$ if $\Ld \ll |k_1|$.
Then, we have
\[ M_3^2 \lesssim \frac{1}{ |k_2|^{2s}|k_3|^{2s} \Ld^2}
\lesssim \max(n^{-2s}, n^{-1+\eps}) \frac{1}{\Ld^{1+\eps}}, \]

\noi
Thus, for $ s > 0 $, we have
$\mathcal{M}_3 \lesssim n^{-\al}$
by summing over two frequencies for  $\Ld^{-1-\eps}$,
 and then $k$ for $z_k^2$.
 Hence, \eqref{N33} holds for $s > 0$.
\end{proof}

\section{Unconditional local well-posedness for $s > \frac{1}{2}$}
\label{SEC:LWP1}
In this section, we put together all the lemmata in the previous section
and prove unconditional local well-posedness of mKdV (with prescribed $L^2$-norm)
in $H^s(\T)$, $s >\frac{1}{2}$.
Some parts of the argument below are standard.
However, we include them for completeness.

For $n \in \mathbb{N}$, define $F_n(v, v_0)$ by $F_n(v, v_0) = F_n^{(1)}(v, v_0)+F_n^{(2)}(v, v_0)$,
where $F_n^{(1)}$ and $F_n^{(2)}$ are given by
\begin{align*}
F_n^{(1)}& = \N^{(-n)}_{1}(v)(t)-\N_{1}^{(-n)}(v_0)\\
F_n^{(2)}& = \int_0^t \RR(v)(t') + \N^{(n)}(v)(t')+ \N_{2}^{(-n)}(v)(t')dt',
\end{align*}

\noi
Then, if $v$ is a solution to \eqref{v1}, then we have
\begin{equation} \label{M1}
v(t) = v_0 + F_n(v, v_0)(t).
\end{equation}

Given an initial condition $v_0 \in H^s$, $s>\frac{1}{2}$,
take a sequence $\big\{v_0^{[m]}\big\}_{m \in \mathbb{N}}$ of smooth functions such that $v_0^{[m]} \to v_0$ in $H^s$.
Let $R = \|v_0\|_{H^s} + 1$.
Then, without loss of generality, we can assume
$\big\|v_0^{[m]}\|_{H^s} \leq R$.

Let $v^{[m]}$ denote the smooth global-in-time solution of mKdV with smooth initial condition
$v_0^{[m]}$.\footnote{For smooth initial data, there exists a global smooth solution thanks to either the theory of complete integrability or the energy method. Instead of using smooth solutions, we could directly construct a solution by Galerkin approximation
and compactness argument.
Here, we use smooth solutions for conciseness of the presentation.}
Then, by Lemmata \ref{LEM:R}--\ref{LEM:N3},
we have
\begin{align} \label{M2}
\|v^{[m]}\|_{C([0, T]; H^s)}
 \leq & R + C \big\{n^{-\al}  \big(\|v^{[m]}\|_{C([0, T]; H^s)}
 + \|v_0^{[m]}\|_{H^s}\big)^2 \notag \\
 & + n \ln n \,  T (\|v^{[m]}\|_{C([0, T]; H^s)}^2 +\|v^{[m]}\|_{C([0, T]; H^s)}^4) \big\}
\|v^{[m]}\|_{C([0, T]; H^s)}
\end{align}

\noi
First, choose $n$ sufficiently large such that
\begin{equation} \label{M21}
C n^{-\al} (3R)^2 < \tfrac{1}{4}.
\end{equation}

\noi
Next, choose $T$ sufficiently small
such that
\begin{equation} \label{M22}
C n \ln n \, T \big( (2R)^2 + (2R)^4\big) <\tfrac{1}{4}.
\end{equation}

\noi
Then, from \eqref{M2} with the continuity argument , we have
\[\|v^{[m]}\|_{C([0, T]; H^s)} \leq 2R, \quad m \in \mathbb{N}.\]

\noi
Moreover, we have
\begin{align} \label{M3}
\| F_n(v^{[m_1]}, v_0^{[m_1]}) & - F_n(v^{[m_2]}, v_0^{[m_2]})\|_{C([0, T]; H^s)} \notag\\
& \leq C_R
\|v^{[m_1]}- v^{[m_2]}\|_{C([0, T]; H^s)}
+ (1+ C_R)\|v_0^{[m_1]}-v_0^{[m_2]}\|_{H^s}
\end{align}

\noi
where $C_R <\frac{1}{2}$ (by possibly taking larger $n$ and smaller $T$.)
Since $v^{[m_j]}$ is a (smooth) solution with initial condition $v_0^{[m_j]}$,
it follows from \eqref{M3} that
\begin{align} \label{M4}
\|v^{[m_1]}- v^{[m_2]}\|_{C([0, T]; H^s)}
\leq C'\|v_0^{[m_1]}-v_0^{[m_2]}\|_{H^s}
\end{align}

\noi
for some $C' >0$.
Hence, $\{v^{[m]}\}$ converges in $C([0, T]; H^s)$.

Let $v^\infty$ denote the limit.
Then, we need to show that $v^\infty$ satisfies
\eqref{v1} or
\begin{equation} \label{M6}
v(t) = v_0 + \int_0^t \N(v)(t') + \RR(v) (t') dt'
\end{equation}

\noi
as a space-time distribution.
First, observe the following lemma.
We present the proof at the end of this section.
\begin{lemma} \label{LEM:H2}
Let $\N$ and $\RR$ be as in \eqref{v1}.
Then, we have, for any $\eps > 0$,
\begin{equation}\label{H21}
\|\N +\RR \|_{H^{-\frac{1}{2}-\eps}}
\lesssim \|v\|_{H^\frac{1}{2}}^3.
\end{equation}

\noi
In particular, if $v$ satisfies \eqref{v1},
then we have
$\|\dt v\|_{H^{-\frac{1}{2}-\eps}}
\lesssim \|v\|_{H^\frac{1}{2}}^3.$

\end{lemma}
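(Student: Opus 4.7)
The plan is to strip off the oscillatory factor $e^{it\Phi(\bar k)}$ via the phase identity $\Phi(\bar k) + k_1^3 + k_2^3 + k_3^3 = k^3$, which rewrites $(\N+\RR)_k$ with only the single phase $e^{itk^3}$ and reduces the lemma to a critical Sobolev product estimate. Setting $u = S(t)v$ so that $v_k = e^{itk^3}u_k$ and $\|v\|_{H^s} = \|u\|_{H^s}$, a direct substitution gives
\[
(\N+\RR)_k \;=\; e^{itk^3}\sum_{\substack{k_1+k_2+k_3=k\\k_1+k_2\ne 0}} ik_3\, u_{k_1}u_{k_2}u_{k_3} \;=\; e^{itk^3}\,\widehat{\bigl(u^2 \partial_x u - c\, \partial_x u\bigr)}(k),
\]
where $c = \tfrac{1}{2\pi}\|u\|_{L^2}^2$ (the excluded resonance $k_1+k_2 = 0$ corresponds exactly to the Fourier representation of $c\,\partial_x u$, using that $u$ is real). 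Since $|e^{itk^3}| = 1$, one obtains $\|\N+\RR\|_{H^{-1/2-\eps}} = \|u^2 \partial_x u - c\,\partial_x u\|_{H^{-1/2-\eps}}$.

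The renormalization piece is easy: $|c|\,\|\partial_x u\|_{H^{-1/2-\eps}} \lesssim \|u\|_{L^2}^2 \|u\|_{H^{1/2}} \lesssim \|v\|_{H^{1/2}}^3$. For the main piece, write $u^2\partial_x u = \tfrac{1}{3}\partial_x(u^3)$ to get $\|u^2\partial_x u\|_{H^{-1/2-\eps}} \lesssim \|u^3\|_{H^{1/2-\eps}}$, reducing the problem to the trilinear bound $\|u^3\|_{H^{1/2-\eps}} \lesssim \|u\|_{H^{1/2}}^3$.

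To prove this bound, work in Fourier and symmetrize so that $|k_1| \ge |k_2|,|k_3|$ (hence $\jb k \le 3\jb{k_1}$). Split the summand inside $\widehat{u^3}(k)$ as
\[
\jb{k_1}^{1/2-\eps}|u_{k_1}||u_{k_2}||u_{k_3}| \;=\; \bigl(\jb{k_1}^{-\eps}\jb{k_2}^{-1/2}\jb{k_3}^{-1/2}\bigr)\, a_{k_1} a_{k_2} a_{k_3}, \qquad a_k := \jb k^{1/2}|u_k|,
\]
and apply Cauchy--Schwarz. The bound reduces to showing
\[
\sum_{\substack{k_1+k_2+k_3=k\\|k_1|\ge|k_2|,|k_3|}} \jb{k_1}^{-2\eps} \jb{k_2}^{-1}\jb{k_3}^{-1} \;\lesssim\; 1 \quad\text{uniformly in } k,
\]
which follows from $\jb{k_1}^{-2\eps} \le \jb{k_2}^{-\eps}\jb{k_3}^{-\eps}$ (since $|k_1|$ is the maximum), so that the sum is controlled by $\sum_{k_2,k_3}\jb{k_2}^{-1-\eps}\jb{k_3}^{-1-\eps} < \infty$.

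There is no real obstacle; the crux is the phase identity eliminating $e^{it\Phi(\bar k)}$, after which the problem becomes the standard critical $H^{1/2}(\T)$-algebra estimate (off by an $\eps$-loss, since $H^{1/2}$ narrowly fails to be an algebra). The $\eps$-loss in the target space is precisely what supplies the gain $\jb{k_1}^{-\eps}$ needed to turn the otherwise logarithmically divergent convolution sum into a convergent one. The assertion $\|\dt v\|_{H^{-1/2-\eps}} \lesssim \|v\|_{H^{1/2}}^3$ for $v$ satisfying \eqref{v1} is then immediate from $\dt v = \N + \RR$.
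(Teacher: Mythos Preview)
Your proof is correct. The technical core matches the paper's argument: both distribute the weight $\jb{k}^{\frac{1}{2}-\eps}$ onto the largest input frequency and then use the $\eps$-gain to render the remaining sums absolutely convergent. The paper carries this out directly on $\N$ (after separating off $\RR$ via Lemma~\ref{LEM:R}), writing $|k|^{\frac{1}{2}-\eps}\lesssim |k_1|^{\frac{1}{2}}|k_2|^{-\frac{\eps}{2}}|k_3|^{-\frac{\eps}{2}}$ and applying Young's inequality to the convolution.

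Your route differs in packaging: you first undo the interaction representation via the phase identity $\Phi(\bar k)=k^3-k_1^3-k_2^3-k_3^3$, which collapses $\N+\RR$ back to the physical-space nonlinearity $u^2\partial_x u - c\,\partial_x u$, and then invoke the conservation structure $u^2\partial_x u=\tfrac13\partial_x(u^3)$ to reduce to the near-algebra bound $\|u^3\|_{H^{1/2-\eps}}\lesssim\|u\|_{H^{1/2}}^3$. This is a cleaner conceptual explanation of why the estimate holds (it is exactly the statement that $H^{1/2}(\T)$ barely fails to be an algebra), and it handles $\N$ and $\RR$ in one stroke rather than separately. The paper's version is shorter because it never leaves Fourier space, but both proofs are essentially equivalent at the level of the underlying inequality.
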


Given a test function $\phi$,
consider
\begin{align}
& \int_0^T \int_\T \bigg\{ v^\infty(t)  - v_0 - \int_0^t \wt{\N}(v^\infty)(t') dt' \notag \\
& \hphantom{XXXX}
- \Big[ v^{[m]}(t) - v_0^{[m]} - \int_0^t \wt{\N}(v^{[m]})(t')   dt'
\Big] \bigg\} \phi(x, t) dx dt \notag \\
& \hphantom{XX}
=  \int_0^T  \int_\T (v^\infty(t) - v^{[m]}(t))\phi \, dx dt
- \int_0^T \int_\T (v_0 - v_0^{[m]}) \phi \, dx dt \notag \\
& \hphantom{XXXX}
+ \int_0^T \int_\T \int_0^t \Big[\wt{\N}(v^\infty)(t') - \wt{\N}(v^{[m]})(t')\Big] dt' \phi(x, t)dx dt \notag \\
&  \hphantom{XX}
=: I_1 - I_2 + I_3
\label{M7}
\end{align}

\noi
where $\wt{\N} = \N + \RR$.
By convergence of $v^{[m]} \to v^\infty$ in $C([0, T];H^s)$
and $v_0^{[m]} \to v_0$ in $H^s$, we have $I_1, I_2 \to 0$ as $m \to \infty$.
By Lemma \ref{LEM:H2},
we have
\begin{align*}
|I_3|
\lesssim T (\|v^\infty\|_{C([0, T];H^s)}^2 + \|v^{[m]}\|_{C([0, T];H^s)}^2 )
\|v^\infty - v^{[m]}\|_{C([0, T];H^s)} \|\phi\|_{L^1_t H^{\frac{1}{2}+\eps}_x}
\to 0
\end{align*}

\noi
as $m\to \infty$.
Therefore, $v^\infty$ is a solution to \eqref{M6}.
It follows from  \eqref{M21} and \eqref{M22}
that the time of existence $T$ satisfies $T \gtrsim \|v_0\|_{H^s}^{-\beta}$
for some $\beta > 0$.
Also, the Lipschitz dependence on initial data follows from \eqref{M4}.

Let $T$ be given.
Suppose that both $v$ and $\wt{v}$ are  solutions in $C([0, T]; H^s)$ to \eqref{v1}
with the same initial condition $v_0 \in H^s(\T)$, $s >\frac{1}{2}$.
First, assume that $\|v\|_{C([0, T]; H^s)}, \|\wt{v}\|_{C([0, T]; H^s)} \leq 2R$
where $R = \|v_0\|_{H^s} + 1$.
Choose $n$ and $\tau$ satisfying \eqref{M21} and \eqref{M22}
(in place of $T$.)
Then, from \eqref{M3}, we have
\begin{align*}
\| v -\wt{v}\|_{C([0, \tau]; H^s)}
= \| F_n(v, v_0)  - F_n(\wt{v}, v_0)\|_{C([0, \tau]; H^s)}
 \leq \tfrac{1}{2}
\|v- \wt{v}\|_{C([0, \tau]; H^s)}.
\end{align*}

\noi
Hence, $v = \wt{v}$ in $C([0, \tau]; H^s)$.
By iterating the argument, we obtain $v = \wt{v}$ in $C([0, T]; H^s)$.
Now, suppose that $\wt{R}:= \frac{1}{2}\max(\|v\|_{C([0, T]; H^s)}, \|\wt{v}\|_{C([0, t]; H^s)}) > R$.
Then, use $\wt{R}$ (in place of $R$) to
determine $n$ and $\tau$ (in place of $T$) in \eqref{M21} and \eqref{M22}.
The rest follows as before.
This proves the unconditional uniqueness (with prescribed $L^2$-norm.)

We conclude this section by presenting the proof of Lemma \ref{LEM:H2}.
\begin{proof} [Proof of Lemma \ref{LEM:H2}]
The contribution from $\RR$ is bounded by Lemma \ref{LEM:R}.
Without loss of generality, assume $|k_1| = \max(|k_1|, |k_2|, |k_3|) \gtrsim |k|$.
Then, by Young's inequality, we have
\begin{align*}
\|\N\|_{H^{-\frac{1}{2}-\eps}}
& \lesssim
\bigg(\sum_k  \Big( \sum_{k_1+k_2+k_3=k}
|k_1|^{\frac{1}{2}} v_{k_1}
|k_2|^{-\frac{1}{2}\eps}v_{k_2}|k_2|^{-\frac{1}{2}\eps}v_{k_3} \Big)^2 \bigg)^\frac{1}{2}\\
& \leq  \|v\|_{H^\frac{1}{2}}
\big\||k|^{-\frac{1}{2}\eps} v_{k}\big\|_{l^1_{k}}^2
\lesssim  \|v\|_{H^\frac{1}{2}} ^3.
\end{align*}

\noi
where we used Cauchy-Schwarz inequality in the last step.
\end{proof}

\section{Endpoint case: $s = \frac{1}{2}$} \label{SEC:endpoint}
The previous argument fails at the endpoint regularity $ s= \frac{1}{2}$ precisely because
$\N_{2}^{(-n)}$ does not satisfy the required estimate when $ s= \frac{1}{2}$.
See Lemma \ref{LEM:N2}.
However, when $s= \frac{1}{2}$,
Lemma \ref{LEM:N2} still holds for
$\N_{21}^{(-n)}$ defined in \eqref{N23}.
Moreover, if any of the following conditions holds, then
Lemma \ref{LEM:N2} holds for
$\N_{22}^{(-n)}$ defined in \eqref{N23}, even when $ s= \frac{1}{2}$:
\begin{eqnarray*}
\textup{(a)}  &
\max(|k_2|, |k_3|) &\gtrsim \min( |k|^\frac{1}{100}, |k_1|^\frac{1}{100}),
\text{ if }\Phi(\bar{k}) \text{ satisfies }\eqref{PHI1},\\
\textup{(b)} &
\max(|k_2|, |k_3|)& \gtrsim\min(|j_2|^\frac{1}{100}, |j_3|^\frac{1}{100}),
\text{ if }\Phi(\bar{k}) \text{ satisfies }\eqref{PHI1},\\
 \textup{(c)}& |j_1|& \gtrsim \min(|k|^{1+\frac{1}{100}}, |k_1|^{1+\frac{1}{100}}), \\
 \textup{(d)}& \Ld_j& \lesssim \max(|k_2|^\frac{1}{100},  |k_3|^\frac{1}{100}), \\
 &&\hspace{+50pt}\vphantom{\Big|}
 \text{if } \Phi(\bar{k}) \text{ satisfies }\eqref{PHI1}
 \text{ and }\Phi(\bar{j}) \text{ satisfies }\eqref{PHI2}, \\
 \textup{(e)}& \Ld_k& \gtrsim \max(|j_2|^\frac{1}{100},  |j_3|^\frac{1}{100}), \text{ if }\Phi(\bar{k}) \text{ satisfies }\eqref{PHI2},
\end{eqnarray*}

\noi
where $\Ld_j$  is as in \eqref{LD2} with $\{j_i\}$ in place of $\{k_i\}$.

\medskip
By symmetry,  assume $|k_2| \geq |k_3|$ and $|j_2| \geq |j_3|$.
First, suppose that (a) holds.
Then,
in \eqref{N26}, we have
\begin{align} \label{O11}
M_2^2 \lesssim \frac{1}{|j_1|^{2\eps} |j_2|^{2s}|j_3|^{2s} |k_2|^{2s-200\eps}|k_3|^{2s} \ld^2}
\lesssim \frac{1}{ |j_2|^{2s+\eps}|j_3|^{2s+\eps} |k_2|^{2s-201\eps}|k_3|^{2s+\eps} \ld^2}.
\end{align}

\noi
Thus, for $ s  = \frac{1}{2}$, we have
$\mathcal{M}_2 \lesssim 1$
by summing over $k_i \,(\ne k_3)$ for $\ld^{-2}$, $k_3$,
$j_2, j_3$,
and then $k$ for $z_k^2$.
Hence, \eqref{N25} holds for $s = \frac{1}{2}$.
Secondly, suppose that (b) holds.
By $|k_2|^{200\eps} \gtrsim |j_2|^\eps|j_3|^\eps$,
we have $M_2^2 \lesssim$ RHS of \eqref{O11}
as before.
%
Next, suppose that (c) holds.
Then, we have a small additional power of $|j_1|$ in the denominators
of \eqref{N26} and \eqref{N27}
Hence, \eqref{N25} holds for $s = \frac{1}{2}$.

Now, suppose (d) holds.
This implies that either
\begin{eqnarray*}
\text{(d.1)} &  |j_2 + j_3|& \lesssim \max(|k_2|^\frac{1}{100},  |k_3|^\frac{1}{100}),
\text{ or } \\
\text{(d.2)} &
|j_1 + j_2||j_1 + j_3|& \lesssim \max(|k_2|^\frac{1}{100},  |k_3|^\frac{1}{100})
\end{eqnarray*}

\noi
By symmetry, assume $|k_2| \geq |k_3|$.
If (d.1) holds, then for fixed $j_3$, there are at most $O\big(|k_2|^\frac{1}{100}\big)$
possible choices for $j_2$.
Then, by going back to Case 2.a in Lemma \ref{LEM:N2}, we
have
\[
M_2^2 \lesssim \frac{1}{|j_2|^{2s}|j_3|^{2s} |k_2|^{2s}|k_3|^{2s} \ld^2},
\lesssim \frac{1}{|j_3|^{4s} |k_2|^\frac{1}{100}|k_3|^{4s-\frac{1}{100}} \ld^2}.
\]

\noi
Thus, for $ s >\frac{101}{400}$, we have
$\mathcal{M}_2 \lesssim 1$
by summing over $k_i \, (\ne k_3)$ for $\ld^{-2}$,
$k_3$, then, $j_2$ and $j_3$,
 and finally $k$ for $z_k^2$.
 Hence, \eqref{N25} holds for $s = \frac{1}{2}$.

If (d.2) holds, then
then for fixed $j_1$, there are at most $O\big(|k_2|^\frac{1}{100}\big)$
possible choices for $j_3$.
Since  $\Phi(\bar{\jmath})$ satisfies \eqref{PHI2}, we have
$|j_1|\sim |j_2|$.
Then, by going back to Case 2.a in Lemma \ref{LEM:N2}, we
have
\[
M_2^2 \lesssim \frac{1}{|j_1|^{2s}|j_3|^{2s} |k_2|^{2s}|k_3|^{2s} \ld^2},
\lesssim \frac{1}{|j_1|^{4s} |k_2|^\frac{1}{100}|k_3|^{4s-\frac{1}{100}} \ld^2}.
\]

\noi
Once again, for $ s >\frac{101}{400}$, we have
$\mathcal{M}_2 \lesssim 1$
by summing over $k_i \, (\ne k_3)$ for $\ld^{-2}$,
$k_3$, then, $j_3$ and $j_1$,
 and finally $k$ for $z_k^2$.
 Hence, \eqref{N25} holds for $s = \frac{1}{2}$.

Finally, suppose (e) holds. Then, in \eqref{N27}, we have
\[M_2^2 \lesssim \frac{1}{|j_2|^{2s+\eps}|j_3|^{2s+\eps}
\Ld_k^{2-200\eps}}.\]

\noi
\noi
Thus, for $ s =\frac{1}{2}$, we have
$\mathcal{M}_2 \lesssim 1$
by summing over two frequencies for $\Ld_k$,
then, $j_2$ and $j_3$,
 and finally $k$ for $z_k^2$.
 Hence, \eqref{N25} holds for $s = \frac{1}{2}$.

Hence, letting $\N_{221}^{(-n)}$ be the restriction of $\N_{22}^{(-n)}$
such that at least one of the above conditions (a), (b), (c), (d), or (e) holds,
we have the following estimates.
\begin{lemma} \label{LEM:O1}
There exists $s_0 <\frac{1}{2}$ such that
the following estimates hold for $s >s_0$:
\begin{align} \label{O111}
\|\N_{221}^{(-n)}(v)\|_{H^s} & \lesssim  \, \|v\|_{H^s}^5 \\
\label{O112}
\|\N_{221}^{(-n)}(v) - \N_{221}^{(-n)}(w) \|_{H^s}
& \lesssim   \big(\|v\|_{H^s}+ \|w\|_{H^s}\big)^4 \|v-w\|_{H^s}.
\end{align}
\end{lemma}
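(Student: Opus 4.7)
The plan is to observe that Lemma~\ref{LEM:O1} is essentially a bookkeeping consequence of the case-by-case multiplier estimates (a)--(e) already worked out in the paragraphs immediately preceding its statement, together with the duality framework used in the proof of Lemma~\ref{LEM:N2}. First I would dualize \eqref{O111} against a test sequence $z \in \ell^2_k$ with $\|z\|_{\ell^2}=1$, and apply Cauchy--Schwarz exactly as in Part~2 of the proof of Lemma~\ref{LEM:N2} to peel off a factor $\|v\|_{L^2}^5 \le \|v\|_{H^s}^5$. This reduces \eqref{O111} to the uniform multiplier bound
\[
\mathcal{M}_2 := \bigg(\sum_k \sum_{\substack{k_1+k_2+k_3=k \\ j_1+j_2+j_3=k_1 \\ k^* > n,\ (\mathrm{a})\,\text{or}\,\cdots\,\text{or}\,(\mathrm{e})}} M_2^2\, z_k^2\bigg)^{\!1/2} \lesssim 1,
\]
where $M_2$ is the same multiplier as in that earlier proof.

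Next I would partition the summation region into five disjoint pieces according to which of (a), (b), (c), (d.1), (d.2), (e) triggers the restriction (with some priority ordering to avoid double-counting), and on each piece invoke the refined pointwise bound on $M_2^2$ derived above. For example, on piece~(a) the preceding discussion gives
\[
M_2^2 \lesssim \frac{1}{|j_2|^{2s+\eps}|j_3|^{2s+\eps}|k_2|^{2s-201\eps}|k_3|^{2s+\eps}\ld^2},
\]
which is summable at $s = \tfrac12$ in the order (the $k_i \ne k_3$ from $\ld^{-2}$, then $k_3$, then $j_2, j_3$, and finally $k$ against $z_k^2$). The analogous refined bounds in cases (b), (c), (d.1), (d.2), (e) each leave a small power of the frequencies unused, so each piece separately yields $\mathcal{M}_2\lesssim 1$ at $s=\tfrac12$ and, by decreasing $\eps$ slightly, on an open neighborhood $s > s_0$ with $s_0<\tfrac12$. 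Taking $s_0$ to be the maximum of the (finitely many) arithmetic thresholds that appear in (a)--(e) yields \eqref{O111}.

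For the Lipschitz bound \eqref{O112} I would apply the standard telescoping identity
\[
\prod_{i=1}^5 v_i - \prod_{i=1}^5 w_i = \sum_{i=1}^5\Bigl(\prod_{\ell<i} w_\ell\Bigr)(v_i-w_i)\Bigl(\prod_{\ell>i} v_\ell\Bigr)
\]
slot-by-slot in the quintilinear form $\N_{221}^{(-n)}$, using that the multiplier $M_2$ depends only on the frequency labels and not on which input function occupies each slot. Each of the five resulting forms obeys the bound of \eqref{O111} with one input replaced by $v-w$, which delivers \eqref{O112}.

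The main (and essentially only) obstacle is organizational rather than analytic: one has to verify that the tiny $\eps$- and $\tfrac{1}{100}$-losses in each of (a)--(e) can be simultaneously traded for a uniform gain $s - s_0 > 0$ strictly below $\tfrac12$. Since there are only five cases, each with an explicit arithmetic threshold on $s$, this reduces to taking a maximum, and the genuinely substantive work is already performed by the case analysis immediately preceding the statement of the lemma.
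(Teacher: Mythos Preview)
Your proposal is correct and is essentially the paper's own argument: the paper gives no separate proof after the statement of Lemma~\ref{LEM:O1}, since the case-by-case analysis (a)--(e) immediately preceding it \emph{is} the proof, and you have accurately reconstructed the duality/Cauchy--Schwarz reduction from Part~2 of Lemma~\ref{LEM:N2} together with the observation that each case leaves an explicit arithmetic margin below $s=\tfrac12$. The only cosmetic point is that the disjoint partition with a priority ordering is unnecessary---since you are proving an upper bound, overlap among the regions (a)--(e) is harmless---but this does not affect the validity of the argument.
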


Now, letting $\N_{222}^{(-n)} := \N_{22}^{(-n)} - \N_{221}^{(-n)}$,
we have
\begin{equation} \label{O13}
\N_{22}^{(-n)} = \N_{221}^{(-n)} + \N_{222}^{(-n)}.
\end{equation}

\noi
In the following, we concentrate on estimating
the contribution from $\N_{222}^{(-n)}$.
Note that $\N_{222}^{(-n)}$ is the restriction of $\N_{22}^{(-n)}$
such that {\it all} of the conditions below hold:
\begin{eqnarray}
\textup{(a')}  &
\max(|k_2|, |k_3|) &\ll \min( |k|^\frac{1}{100}, |k_1|^\frac{1}{100}),  \label{O14}\\
\textup{(b')} &
\max(|k_2|, |k_3|)& \ll\min(|j_2|^\frac{1}{100}, |j_3|^\frac{1}{100}),\label{O15}\\
 \textup{(c')}& |j_1|& \ll \min(|k|^{1+\frac{1}{100}}, |k_1|^{1+\frac{1}{100}}), \label{O16}\\
 \text{(d')} & \Ld_j & \gg \max(|k_2|^\frac{1}{100},  |k_3|^\frac{1}{100}) \label{O166},\\
\text{(e')} & \Ld_k & \ll \max(|j_2|^\frac{1}{100},  |j_3|^\frac{1}{100}), \label{O167}
\end{eqnarray}

\noi
where (a') and (b') hold when $\Phi(\bar{k})$ satisfies \eqref{PHI1},
(d') holds when $\Phi(\bar{k})$ satisfies \eqref{PHI1} and $\Phi(\bar{\jmath})$ satisfies \eqref{PHI2},
and (e') holds when $\Phi(\bar{k})$ satisfies \eqref{PHI2}.
(Recall that we also assume \eqref{Z1}--\eqref{Z2}.)
Henceforth, we assume that the frequencies are restricted such that
the conditions (a')--(e') hold.
By \eqref{v1} and \eqref{v2}, we have
\begin{align}
\notag
\big(\N_{222}^{(-n)}\big)_k
& = \frac{1}{3}\sum_{\substack{k_1+ k_2+ k_3 = k \\
j_1 + j_2 +j_3 = k_1 \\ \Phi(\bar{k}), \Phi(\bar{\jmath}) \ne 0\\k^* > n}}
\frac{k k_1e^{i t(\Phi(\bar{k})+\Phi(\bar{\jmath}))}}{i \Phi(\bar{k})} v_{j_1}v_{j_2}v_{j_3}v_{k_2}v_{k_3}  \\
& = -i \sum_{\substack{k_1+ k_2+ k_3 = k\\
\Phi(\bar{k})\ne 0\\k^* > n}}
\frac{k e^{i t(\Phi(\bar{k}))}}{i \Phi(\bar{k})} \dt (\N_1)_{k_1} v_{k_2}v_{k_3}
+i \sum_{\substack{k_1+ k_2+ k_3 = k\\
\Phi(\bar{k})\ne 0\\k^* > n}}
\frac{k e^{i t(\Phi(\bar{k}))}}{i \Phi(\bar{k})}  (\N_2)_{k_1} v_{k_2}v_{k_3} \notag\\
& = : (\N_3)_k +(\N_4)_k.  \label{O17}
\end{align}

The following lemma shows that $\N_4$ can be controlled in $H^\frac{1}{2}$.
\begin{lemma} \label{LEM:O2}
The following estimates hold:
\begin{align} \label{O221}
\|\N_4(v)\|_{H^\frac{1}{2}} & \lesssim  \, \|v\|_{H^\frac{1}{2}}^7 \\
\label{O222}
\|\N_4(v) - \N_4(w) \|_{H^\frac{1}{2}}
& \lesssim   \big(\|v\|_{H^\frac{1}{2}}+ \|w\|_{H^\frac{1}{2}}\big)^6 \|v-w\|_{H^\frac{1}{2}}.
\end{align}
\end{lemma}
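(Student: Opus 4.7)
The plan is to run a duality plus Cauchy--Schwarz argument on the septilinear expression $\N_4$ in the spirit of Lemma \ref{LEM:N2}, but now with \emph{two} simultaneous smoothing factors $|\Phi(\bar{k})|^{-1}$ and $|\Phi(\bar{\jmath})|^{-1}$. The very restrictive frequency cuts (a')--(e') defining $\N_{222}^{(-n)}$, i.e. \eqref{O14}--\eqref{O167}, provide the small gains needed to close at the critical regularity $s = \tfrac{1}{2}$.

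First I would substitute $\dt v_{j_i}$ in $(\N_2)_{k_1}$ via \eqref{v1}, obtaining $\N_4 = \N_4^{(\N)} + \N_4^{(\RR)}$ depending on whether $\dt v_{j_i}$ is replaced by the nonresonant piece $\N(v)_{j_i}$ or the resonant piece $\RR(v)_{j_i} = -ij_i|v_{j_i}|^2 v_{j_i}$; the resonant piece pins three of the seven frequencies to $\pm j_1$, making it strictly easier to handle. Setting $u_k := |k|^{1/2}|v_k|$ so that $\|u\|_{\ell^2} = \|v\|_{H^{1/2}}$, duality reduces matters to showing, for any test $z$ with $\|z\|_{\ell^2} = 1$,
\[
\sum_{\Gamma} M \cdot u_{k_2} u_{k_3} u_{j_2} u_{j_3} u_{m_1} u_{m_2} u_{m_3} \, z_k \;\lesssim\; \|u\|_{\ell^2}^7,
\]
for $\N_4^{(\N)}$, where $\Gamma = \{k_1+k_2+k_3=k,\; j_1+j_2+j_3=k_1,\; m_1+m_2+m_3=j_1\}$ and the multiplier $M$ collects the factor $|k|^{1/2}$ from the $H^{1/2}$-norm, the numerator frequencies $|k|, |k_1|, |j_1|$ from the three successive derivative/resonance operators, both Phi-denominators, and the Sobolev weights $|\cdot|^{-1/2}$ on the seven input frequencies. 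By Cauchy--Schwarz this reduces to $\mathcal{M}_4 := \bigl(\sum_k \sum_\Gamma M^2 z_k^2\bigr)^{1/2} \lesssim 1$.

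Then I would case-split on whether $\Phi(\bar{k})$ and $\Phi(\bar{\jmath})$ fall into \eqref{PHI1} or \eqref{PHI2}. When $\Phi(\bar{k})$ is of type \eqref{PHI1}, (a') gives $|k_1| \sim |k|$ and $|\Phi(\bar{k})|^{-1} \lesssim |k|^{-2}\ld_k^{-1}$, collapsing the outer part of $M$ to roughly $|k|^{1/2}\ld_k^{-1}$; meanwhile (c') bounds $|j_1| \lesssim |k|^{1+1/100}$ with a tiny polynomial loss absorbed by (a')/(b'); the inner smoothing is $|\Phi(\bar{\jmath})|^{-1} \lesssim |j_1|^{-2}\ld_j^{-1}$ in inner-\eqref{PHI1}, or $|j_1|^{-1}\Ld_j^{-1}$ in inner-\eqref{PHI2}, where (d') provides a lower bound on $\Ld_j$. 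One then sums the $\ld_k^{-2}$ weight over $(k_2, k_3)$, the $\ld_j^{-2}$ or $\Ld_j^{-2+\eps}$ weight over $(j_2, j_3)$, pairs the inner-inner variables $m_\ell$ against the Sobolev weights $|m_\ell|^{-1/2}$ (the triple convolution over $m_1+m_2+m_3=j_1$ costing only a logarithm in $|j_1|$ that is absorbed by the (c') gain), and finally uses $\sum_k z_k^2 = 1$. When $\Phi(\bar{k})$ is of type \eqref{PHI2}, the resonance geometry $|k_1| \sim |k_2| \sim |k_3| \sim |k|$ lets $|k_2|^{-1/2}|k_3|^{-1/2} \sim |k|^{-1}$ absorb most of the $|k|^{3/2}|k_1|$ numerator against $|\Phi(\bar{k})|^{-1} \lesssim (|k|\Ld_k)^{-1}$, and the constraint (e') forces $\max(|j_2|,|j_3|) \gg \Ld_k^{100}$, which can be coupled with the Sobolev weights $|j_2|^{-1/2}|j_3|^{-1/2}$ to balance the $\Ld_k^{-2}$ sum over the outer frequencies.

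The hard part will be the outer-\eqref{PHI2}/(e') subcase, because (e') only constrains $\Ld_k$ from above in terms of $\max(|j_2|,|j_3|)^{1/100}$ rather than providing the direct lower bound that a naive application of Cauchy--Schwarz would require; one must carefully interleave the outer sum (over $k_2, k_3$, where the $\Ld_k^{-2}$ weight lives) with the inner sum (over $j_2, j_3$, where the compensating $\max(|j_2|,|j_3|)$ weight lives), using the rigid geometry $|k_i| \sim |k|$ of \eqref{PHI2}, to convert the (e') restriction into honest $\ell^2$-summability. The Lipschitz estimate \eqref{O222} then follows by running the same case analysis on the telescoped differences of septilinear forms, exactly as in Lemmas \ref{LEM:N2} and \ref{LEM:O1}.
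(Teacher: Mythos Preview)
Your approach would work, but it takes a substantially more laborious route than the paper's and invokes the frequency restrictions \eqref{O14}--\eqref{O167} where they are not actually needed.

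The paper factors the estimate through Corollary~\ref{COR:H1}: since $\|\N_2(v)\|_{H^s} \lesssim \|v\|_{H^{1/2}}^5$ for every $s < \tfrac{1}{2}$ (in particular $s = -\tfrac{1}{2}$), it suffices to prove the \emph{trilinear} bound
\[
\|\N_4\|_{H^{1/2}} \;\lesssim\; \|\N_2\|_{H^{-1/2}}\,\|v\|_{H^{1/2}}^2 .
\]
Treating $(\N_2)_{k_1}$ as a black box in $H^{-1/2}$, duality plus Cauchy--Schwarz reduces this to controlling the multiplier
\[
M_4 = \frac{|k|^{3/2}|k_1|^{1/2}}{|\Phi(\bar{k})|\,|k_2|^{1/2}|k_3|^{1/2}},
\]
which involves only the outer frequencies $(k,k_1,k_2,k_3)$. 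The two cases \eqref{PHI1}/\eqref{PHI2} for $\Phi(\bar{k})$ are then dispatched in a line each, with no appeal to (a$'$)--(e$'$) whatsoever.

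By contrast, you expand $(\N_2)_{k_1}$ fully and substitute $\dt v_{j_i}$ via \eqref{v1}, arriving at a genuine septilinear form with an inner--inner triple $(m_1,m_2,m_3)$ and a four-way case split on the pair $(\Phi(\bar{k}),\Phi(\bar{\jmath}))$. This can be made to work --- the double smoothing $|\Phi(\bar{k})\Phi(\bar{\jmath})|^{-1}$ does furnish enough decay, and your observation that the $m$-sum costs only polylogarithms in $|j_1|$ while gaining a factor $|j_1|^{-1}$ is correct --- but in effect you are re-deriving Corollary~\ref{COR:H1} inline and entangling it with the outer estimate. What you flag as the hard case (outer \eqref{PHI2} under (e$'$)) is in fact trivial in the paper's framework: there $M_4^2 \lesssim \Ld_k^{-2}$ directly, and the (e$'$) coupling between $\Ld_k$ and the $j$-frequencies never enters, because the $j$-variables have already been absorbed into $\|\N_2\|_{H^{-1/2}}$. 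The restrictions (a$'$)--(e$'$) are genuinely needed for $\N_5$ in Lemma~\ref{LEM:O3}, where the denominator carries $\Phi(\bar{\jmath})$ rather than $\Phi(\bar{k})$; for $\N_4$ they are red herrings.
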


\noi
Before proving this lemma,
let us present the following corollary to Lemma \ref{LEM:N2}.
\begin{corollary} \label{COR:H1}
For $s < \frac{1}{2}$, we have
\begin{align} \label{H11}
\|\N_2(v)\|_{H^s} & \lesssim  \, \|v\|_{H^\frac{1}{2}}^5 \\
\label{H12}
\|\N_2(v) - \N_2(w) \|_{H^s}
& \lesssim   \big(\|v\|_{H^\frac{1}{2}}+ \|w\|_{H^\frac{1}{2}}\big)^4 \|v-w\|_{H^\frac{1}{2}}.
\end{align}
\end{corollary}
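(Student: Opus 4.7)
My plan is to revisit the proof of Lemma \ref{LEM:N2} Part 2, but with the $H^{\frac12}$-norm of $v$ on the right, and to exploit the strict inequality $s<\frac12$ to close the argument. Writing $\N_2 = \N_{21}+\N_{22}$ as in \eqref{N23} (with no restriction $k^*>n$, which plays no role for $\N_2$), the first piece $\N_{21}$ is already controlled by Part 1 of Lemma \ref{LEM:N2}, which holds for all $s\geq\frac14$: since $\|f\|_{H^s}\leq\|f\|_{H^{1/4}}$ for $s<\frac14$ and $\|v\|_{H^{\max(s,1/4)}}\leq\|v\|_{H^{1/2}}$, one gets $\|\N_{21}(v)\|_{H^s}\lesssim\|v\|_{H^{1/2}}^5$ immediately.

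The serious work is for $\N_{22}$. Setting $u_j:=\jb{j}^{1/2}v_j$ so that $\|u\|_{l^2}=\|v\|_{H^{1/2}}$, duality reduces matters to proving
\[
\sum_k \sum_{\substack{k_1+k_2+k_3=k\\ j_1+j_2+j_3=k_1\\ \Phi(\bar k)\ne 0}} M_2\, u_{j_1}u_{j_2}u_{j_3}u_{k_2}u_{k_3}\,z_k \;\lesssim\; \|u\|_{l^2}^5,\qquad \|z\|_{l^2}=1,
\]
with multiplier
\[
M_2 \;=\; \frac{|k|^{1+s}|k_1|}{|\Phi(\bar k)|\,|j_1|^{1/2}|j_2|^{1/2}|j_3|^{1/2}|k_2|^{1/2}|k_3|^{1/2}}.
\]
A Cauchy--Schwarz step (as in Part 2) reduces the problem to $\mathcal M_2 := \big(\sum_k z_k^2 \sum M_2^2\big)^{1/2}\lesssim 1$. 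Under the worst-case assumptions \eqref{Z1}--\eqref{Z2}, together with $|k|\lesssim k^*\sim|k_1|$, $|j_1|\gtrsim k^*$, and the resonance bounds $|\Phi(\bar k)|^2\gtrsim(k^*)^4\lambda^2$ (Case 2.a) or $(k^*)^2\Ld^2$ (Case 2.b), a direct computation yields the baseline estimates
\[
M_2^2\lesssim\frac{(k^*)^{2s-1}}{\lambda^2\,|j_2||j_3||k_2||k_3|}\quad\text{(Case 2.a)},\qquad M_2^2\lesssim\frac{(k^*)^{2s-1}}{\Ld^2\,|j_2||j_3|}\quad\text{(Case 2.b)}.
\]
The decisive new feature relative to Lemma \ref{LEM:N2} is that $(k^*)^{2s-1}$ is now a strictly negative power.

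The idea is to trade this $k^*$-gain for a small additional weight on each of $j_2,j_3,k_2,k_3$. Fix $0<\delta<(1-2s)/4$. Using $|j_i|\leq|j_1|$ for $i=2,3$ (from \eqref{Z1}), writing $|j_i|^{-1}\leq |j_1|^{2\delta}|j_i|^{-1-\delta}$, and then absorbing $|j_1|^{2\delta}$ into $(k^*)^{2\delta}$ via $|j_1|\gtrsim k^*$, together with $|k_i|\leq k^*$ for $i=2,3$ written as $|k_i|^{-1}\leq(k^*)^{\delta}|k_i|^{-1-\delta}$, we obtain
\[
M_2^2 \;\lesssim\; \frac{(k^*)^{2s-1+4\delta}}{\lambda^2\,|j_2|^{1+\delta}|j_3|^{1+\delta}|k_2|^{1+\delta}|k_3|^{1+\delta}}
\;\lesssim\; \frac{1}{\lambda^2\,|j_2|^{1+\delta}|j_3|^{1+\delta}|k_2|^{1+\delta}|k_3|^{1+\delta}}
\]
in Case 2.a, and analogously $M_2^2\lesssim \Ld^{-2}|j_2|^{-1-\delta}|j_3|^{-1-\delta}$ in Case 2.b. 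The $j$-sum factors out immediately, while the $k$-sums $\sum_{k_2,k_3}\lambda^{-2}|k_2|^{-1-\delta}|k_3|^{-1-\delta}$ (respectively $\sum_{k_2,k_3}\Ld^{-2}$) are uniformly bounded in $k$ by splitting according to which pair realizes the minimum in $\lambda$ (or $\Ld$) and applying a standard convolution estimate. Summing against $z_k^2$ with $\|z\|_{l^2}=1$ yields $\mathcal M_2\lesssim 1$, which proves \eqref{H11}; the Lipschitz bound \eqref{H12} follows by the same telescoping argument as in Lemma \ref{LEM:N2}.

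The main obstacle is the logarithmic divergence that would occur at $s=\frac12$: the naive weights $|j_i|^{-1}$, $|k_i|^{-1}$ arising from the $H^{1/2}$ norm on the right are not summable. The entire argument therefore hinges on being able to redistribute the $k^*$-smallness onto a $\delta$-improvement of each of the four ``low'' weights, which is precisely what the strict inequality $s<\frac12$ permits.
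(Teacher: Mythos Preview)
Your approach is exactly the ``slight modification of the proof of Lemma~\ref{LEM:N2}'' that the paper alludes to (the paper omits all details): put the $H^{\frac12}$-norm on the right, repeat the Cauchy--Schwarz reduction of Part~2, and use the spare negative power $(k^*)^{2s-1}$ coming from $s<\tfrac12$ to upgrade the borderline weights $|j_i|^{-1}$, $|k_i|^{-1}$ to summable ones. The final multiplier bound you state and the subsequent summations are correct.

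One step in your write-up, however, points the wrong way. You claim to ``absorb $|j_1|^{2\delta}$ into $(k^*)^{2\delta}$ via $|j_1|\gtrsim k^*$'', but $|j_1|\gtrsim k^*$ gives $|j_1|^{2\delta}\gtrsim (k^*)^{2\delta}$, not the reverse, and $|j_1|$ can in fact be arbitrarily larger than $k^*$ (nothing in \eqref{Z1}--\eqref{Z2} prevents $|j_1|\gg |k_1|$). The remedy is simply not to spend the factor $|j_1|^{-1}$ prematurely when forming the baseline. Keep instead
\[
M_2^2 \;\lesssim\; \frac{(k^*)^{2s}}{\lambda^2\,|j_1|\,|j_2||j_3||k_2||k_3|},
\]
perform the transfers $|j_i|^{-1}\le |j_1|^{\delta}|j_i|^{-1-\delta}$ for $i=2,3$ and $|k_i|^{-1}\le (k^*)^{\delta}|k_i|^{-1-\delta}$ for $i=2,3$, and note that the remaining factor is $(k^*)^{2s+2\delta}|j_1|^{2\delta-1}$. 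Since $2\delta-1<0$ and $|j_1|\gtrsim k^*$, this is now legitimately $\lesssim (k^*)^{2s-1+4\delta}\le 1$ for $\delta<(1-2s)/4$, recovering your stated bound. The same adjustment handles Case~2.b. With this correction the argument is complete.
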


\noi
We omit the proof of this corollary, since it follows from a slight modification of
the proof of Lemma \ref{LEM:N2}.

\begin{proof}[Proof of Lemma \ref{LEM:O2}]
We only prove \eqref{O221} since \eqref{O222} follows in a similar manner.
In view of Corollary \ref{COR:H1}, it suffices to prove
\begin{equation}
\|\N_4\|_{H^\frac{1}{2}}
\lesssim \|\N_2\|_{H^{-\frac{1}{2}}} \|v\|_{H^\frac{1}{2}}^2.
\end{equation}

\noi
By duality, it suffices to prove
\begin{align} \label{O223}
 \sum_k \sum_{\substack{k_1+ k_2+ k_3 = k\\\Phi(\bar{k}) \ne 0 \\k^* > n}}
M_4 u_{k_1}u_{k_2}u_{k_3} z_k
\lesssim  \|u\|_{L^2}^3
\end{align}

\noi
where $\|z\|_{L^2} = 1$ and $M_4 $ is given by
\[ M_4 = M_4(k_1, k_2, k_3) : = \frac{|k|^\frac{3}{2}|k_1|^\frac{1}{2}   }{|\Phi(\bar{k})||k_2|^\frac{1}{2}|k_3|^\frac{1}{2}}.\]

\noi
As before, by Cauchy-Schwarz inequality, we have
$\text{LHS of }\eqref{O223}
 \leq
\mathcal{M}_4
\|u\|_{L^2}^3$,
where $\mathcal{M}_4 = \big( \sum_k \sum_{k_1 + k_2 +k_3 = k}   M_4^2 z_k^2\big)^\frac{1}{2}$.

\medskip
\noi
$\circ$ Case 1:  $\Phi(\bar{k})$  satisfies \eqref{PHI1}.
In this case, we have
$ M_4^2 \lesssim |k_2|^{-1}|k_3|^{-1} \ld^{-2}
\leq |k_3|^{-2} \ld^{-2}$,
where $\ld$ is as in \eqref{LD1}.
Thus, we have $\mathcal{M}_4 \lesssim 1$
by summing over $k_i \, (\ne k_3)$ appearing in $\ld^{-2}$,
$k_3$,
 and then $k$ for $z_k^2$.
 Hence, \eqref{O223} holds.

\medskip
\noi
$\circ$ Case 2:  $\Phi(\bar{k})$  satisfies \eqref{PHI2}.
By $|k_2|\sim|k_1| \sim k^*$, we have
$M_4^2 \lesssim   \Ld^{-2}$
where $\Ld$ is as in \eqref{LD2}.
Thus, we have
$\mathcal{M}_4 \lesssim 1$
by summing over two frequencies for  $\Ld^{-2}$,
 and then $k$ for $z_k^2$.
 Hence, \eqref{O223} holds.
\end{proof}

Now, we need to estimate the contribution from $\N_3$.
In doing so, we actually estimate $\int_0^t \N_3(t')dt'$.
Integrating by parts, we have
\begin{align}
\int_0^t  \N_3(t')dt'
& = -\frac{1}{3}\int_0^t
\sum_{\substack{k_1+ k_2+ k_3 = k \\
j_1 + j_2 +j_3 = k_1 \\ \Phi(\bar{k}), \Phi(\bar{\jmath}) \ne 0\\k^* > n}}
\frac{k k_1e^{i t'(\Phi(\bar{k})+\Phi(\bar{\jmath}))}}{i \Phi(\bar{\jmath})} \big(v_{j_1}v_{j_2}v_{j_3}v_{k_2}v_{k_3}\big) (t') dt'\notag \\
& +\frac{1}{3} \int_0^t \sum_{\substack{k_1+ k_2+ k_3 = k \\
j_1 + j_2 +j_3 = k_1 \\ \Phi(\bar{k}), \Phi(\bar{\jmath}) \ne 0\\k^* > n}}
\frac{k k_1e^{i t'(\Phi(\bar{k})+\Phi(\bar{\jmath}))}}{ \Phi(\bar{k})\Phi(\bar{\jmath})} \big(v_{j_1}v_{j_2}v_{j_3}\big)(t')
\dt\big(v_{k_2}v_{k_3}\big) (t') dt' \notag\\
& -\frac{1}{3}  \sum_{\substack{k_1+ k_2+ k_3 = k \\
j_1 + j_2 +j_3 = k_1 \\ \Phi(\bar{k}), \Phi(\bar{\jmath}) \ne 0\\k^* > n}}
\frac{k k_1e^{i t(\Phi(\bar{k})+\Phi(\bar{\jmath}))}}{\Phi(\bar{k})\Phi(\bar{\jmath})} \big(v_{j_1}v_{j_2}v_{j_3}
v_{k_2}v_{k_3}\big)(t')\bigg|_0^t \notag\\
& =: \int_0^t \N_5 (t') dt'+ \int_0^t \N_6 (t') dt' + \N_7(v)(t) - \N_7(v)(0). \label{O333}
\end{align}

First, note that $\N_5$ looks like $\N_2^{(-n)}$ in Lemma \ref{LEM:N2}.
However, it satisfies a better estimate thanks to the conditions (b')--(e').
\begin{lemma} \label{LEM:O3}
The following estimates hold:
\begin{align} \label{O31}
\|\N_5(v)\|_{H^\frac{1}{2}} & \lesssim  \, \|v\|_{H^\frac{1}{2}}^5 \\
\label{O32}
\|\N_5(v) - \N_5(w) \|_{H^\frac{1}{2}}
& \lesssim   \big(\|v\|_{H^\frac{1}{2}}+ \|w\|_{H^\frac{1}{2}}\big)^4 \|v-w\|_{H^\frac{1}{2}}.
\end{align}
\end{lemma}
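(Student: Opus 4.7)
My plan is to follow the template used to prove Lemma \ref{LEM:N2}, with the key change that the denominator now carries $\Phi(\bar{\jmath})$ in place of $\Phi(\bar{k})$. By duality, estimating $\|\N_5(v)\|_{H^{1/2}}$ reduces to the pentalinear bound
\[
\sum_k \sum_{\substack{k_1+k_2+k_3=k\\ j_1+j_2+j_3=k_1}}
M_5\, u_{j_1}u_{j_2}u_{j_3}u_{k_2}u_{k_3}\, z_k \lesssim \|u\|_{L^2}^5,
\]
with $\|z\|_{L^2}=1$, $u_k=v_k$, and
\[
M_5 = \frac{|k|^{3/2}\,|k_1|}{|\Phi(\bar{\jmath})|\,|j_1|^{1/2}|j_2|^{1/2}|j_3|^{1/2}|k_2|^{1/2}|k_3|^{1/2}}.
\]
A single Cauchy--Schwarz then reduces matters to proving $\mathcal{M}_5 := \bigl(\sum_k \sum M_5^2 z_k^2\bigr)^{1/2} \lesssim 1$.

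First I would exploit the standing restrictions \eqref{O14}--\eqref{O167} and the orderings \eqref{Z1}--\eqref{Z2}. Condition \eqref{O14} forces $|k_2|,|k_3|\ll |k_1|^{1/100}$, from which $|k|\sim |k_1|$, while \eqref{Z1} gives $|j_1|\gtrsim |k_1|$. Thus the numerator $|k|^{3/2}|k_1|\sim |k_1|^{5/2}$ is dominated by $|j_1|^{5/2}$, and the analysis splits according to whether $\Phi(\bar{\jmath})$ satisfies \eqref{PHI1} or \eqref{PHI2}, in direct parallel with Cases 2.a and 2.b in Lemma \ref{LEM:N2}.

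In Case A ($\Phi(\bar{\jmath})$ satisfies \eqref{PHI1}), the bound $|\Phi(\bar{\jmath})|\gtrsim |j_1|^2\ld_j$ combined with $|j_1|\gtrsim |k_1|$ yields
\[
M_5^2 \lesssim \frac{1}{\ld_j^2\,|j_2||j_3||k_2||k_3|}.
\]
I would sum the free index inside $\ld_j$ first, then the remaining $j_2$ or $j_3$, and finally $k_2,k_3$ (trading the tiny power savings provided by \eqref{O15}--\eqref{O16} for any logarithmic loss) and $k$ against $\sum_k z_k^2=1$. In Case B ($\Phi(\bar{\jmath})$ satisfies \eqref{PHI2}), one has $|j_1|\sim|j_2|\sim|j_3|$ and $|\Phi(\bar{\jmath})|\gtrsim |j_1|\Ld_j$, so
\[
M_5^2 \lesssim \frac{1}{\Ld_j^2\,|k_2||k_3|}.
\]
Here condition \eqref{O166} is crucial: it provides $\Ld_j \gg \max(|k_2|,|k_3|)^{1/100}$, yielding a fractional power of decay in $\max(|k_2|,|k_3|)$ that allows the two indices frozen by $\Ld_j$ together with $k_2,k_3$ and $k$ to be summed with room to spare.

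The primary obstacle lies in Case A, where $\ld_j$ may be as small as $O(1)$ while the $j$-frequencies are all comparable to $|k_1|$: one must check that the small-frequency sum over $k_2, k_3$ does not create a divergent logarithmic power, and this is precisely where the strict inequalities in \eqref{O14}--\eqref{O15} are invoked to trade an $\eps$-power of $|k_1|$ for a convergent bookkeeping. Once \eqref{O31} is secured, the Lipschitz estimate \eqref{O32} follows by the standard telescoping of $v_{j_1}v_{j_2}v_{j_3}v_{k_2}v_{k_3}-w_{j_1}w_{j_2}w_{j_3}w_{k_2}w_{k_3}$ into five differences, applying the argument of \eqref{O31} term by term and using the symmetry of $M_5$ in the $j$- and $k$-indices.
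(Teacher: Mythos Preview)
Your overall strategy---duality, Cauchy--Schwarz, and bounding $\mathcal{M}_5$ using the restrictions \eqref{O14}--\eqref{O167}---matches the paper's. However, the case analysis is incomplete. You split only on whether $\Phi(\bar{\jmath})$ satisfies \eqref{PHI1} or \eqref{PHI2}, but the restrictions you invoke are \emph{conditional} on the behaviour of $\Phi(\bar{k})$: as stated just after \eqref{O167}, conditions (a${}'$) and (b${}'$) apply only when $\Phi(\bar{k})$ satisfies \eqref{PHI1}, condition (d${}'$) only when $\Phi(\bar{k})$ satisfies \eqref{PHI1} and $\Phi(\bar{\jmath})$ satisfies \eqref{PHI2}, and condition (e${}'$) only when $\Phi(\bar{k})$ satisfies \eqref{PHI2}. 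In particular, your opening claim that \eqref{O14} forces $|k_2|,|k_3|\ll |k_1|^{1/100}$ is simply false when $\Phi(\bar{k})$ satisfies \eqref{PHI2}, since then $|k_1|\sim|k_2|\sim|k_3|$.

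Consequently the paper splits into four cases, and your Cases A and B correspond only to the paper's Cases~1 and~3 (where $\Phi(\bar{k})$ satisfies \eqref{PHI1}); for those two your argument is essentially correct. In the missing Cases~2 and~4 ($\Phi(\bar{k})$ satisfies \eqref{PHI2}), neither (b${}'$) nor (d${}'$) is available, and your bound $M_5^2\lesssim (\ld_j^2\,|j_2||j_3||k_2||k_3|)^{-1}$ (or its $\Ld_j$ analogue) leaves a logarithmically divergent sum over $k_2$. The paper closes this gap with condition (e${}'$), which you never invoke: the constraint $\Ld_k \ll |j_2|^{1/100}$ bounds, for fixed $k$ and $k_3$, the number of admissible $k_2$ by $O(|j_2|^{1/50})$, and combining with (c${}'$) and $|k_2|\sim|k_1|$ converts this into $O(|k_2|^{101/5000})$. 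This counting argument replaces the power saving that (b${}'$) and (d${}'$) supply in the other cases.
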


\begin{proof}
We only prove \eqref{O31} since \eqref{O32} follows in a similar manner.
By duality, it suffices to prove
\begin{align} \label{O33}
\sum_k
\sum_{\substack{k_1+ k_2+ k_3 = k \\
j_1 + j_2 +j_3 = k_1 \\ \Phi(\bar{k}), \Phi(\bar{\jmath}) \ne 0\\k^* > n}}
M_5 u_{j_1}u_{j_2}u_{j_3}u_{k_2}u_{k_3} z_k
\lesssim  \|u\|_{L^2}^5
\end{align}

\noi
where $\|z\|_{L^2} = 1$ and $M_5 $ is given by
\[ M_5 = M_5(j_1,j_2,j_3, k_2, k_3) : =
\frac{|k|^{1+s}|k_1|   }{|\Phi(\bar{\jmath})||j_1|^s|j_2|^s|j_3|^s|k_2|^s|k_3|^s}\]

\noi
with $s = \frac{1}{2}$.
As before, by Cauchy-Schwarz inequality, we have
$\text{LHS of }\eqref{O33}
 \leq
\mathcal{M}_5
\|u\|_{L^2}^5$,
where $\mathcal{M}_5 = \big( \sum_{\substack{k_1+ k_2+ k_3 = k \\
j_1 + j_2 +j_3 = k_1}}
   M_5^2 z_k^2\big)^\frac{1}{2}$.

By symmetry, assume $|k_2| \geq |k_3|$ and $|j_2| \geq |j_3|$.

\medskip
\noi
$\circ$ Case 1:  Both $\Phi(\bar{k})$  and $\Phi(\bar{\jmath})$  satisfy \eqref{PHI1}.
Let $\ld_j$ be as in \eqref{LD1} with $\{j_i\}$ in place of $\{k_i\}$.
Then, as in Case 2.a in Lemma \ref{LEM:N2}, we have
\[M_5^2 \lesssim \frac{1}{|j_2|^{2s}|j_3|^{2s} |k_2|^{2s}|k_3|^{2s} \ld_j^2}
\ll \frac{1}{|j_3|^{4s-\frac{2\eps}{100}} |k_2|^{2s+\eps}|k_3|^{2s+\eps} \ld_j^2},
\]

\noi
since $|j_2| \gg |k_2|^{100} \geq |k_3|^{100}$ by the condition (b').
Thus, for $s = \frac{1}{2}$, we have
$\mathcal{M}_5 \lesssim 1$
by summing over $k_2, k_3$, then $j_i \,(\ne j_3)$, for $\ld_j^{-2}$,
$j_3$,
 and $k$ for $z_k^2$.
 Hence, \eqref{O33} holds for $s = \frac{1}{2}$.

\medskip
\noi
$\circ$ Case 2:  $\Phi(\bar{k})$ satisfies \eqref{PHI2} and
$\Phi(\bar{\jmath})$  satisfies \eqref{PHI1}.
By (e'),  given $k_3$, there are at most $O(|j_2|^\frac{1}{50})$ possible choices for $k_2$.
Then, by (c') with $|k_2|\sim|k_1|$,
given $k_3$, there are at most $O(|k_2|^\frac{101}{5000})$ possible choices for $k_2$.
As in Case 2.a in Lemma \ref{LEM:N2}, we have
\[M_5^2 \lesssim \frac{1}{|j_2|^{2s}|j_3|^{2s} |k_2|^{2s}|k_3|^{2s} \ld_j^2}
\ll \frac{1}{|j_3|^{4s} |k_2|^{\frac{1}{40}}|k_3|^{4s-\frac{1}{40}} \ld_j^2}.\]

\noi
Thus, for $s = \frac{1}{2}$, we have
$\mathcal{M}_5 \lesssim 1$
by summing over $k_2, k_3$, then $j_i \,(\ne j_3)$, for $\ld_j^{-2}$,
$j_3$,
 and $k$ for $z_k^2$.
 Hence, \eqref{O33} holds for $s = \frac{1}{2}$.

\medskip
\noi
$\circ$ Case 3: $\Phi(\bar{k})$ satisfies \eqref{PHI1} and $\Phi(\bar{\jmath})$ satisfies \eqref{PHI2}.
In this case, we have $ j^* \sim |j_1| \sim |j_2|\sim |j_3| \gtrsim |k_1|$.
By (d') in \eqref{O166},
we have
\[M_5^2 \lesssim \frac{1}{ |k_2|^{2s}|k_3|^{2s}\Ld_j^2}
\ll \frac{1}{ |k_2|^{2s+\eps}|k_3|^{2s+\eps}\Ld_j^{2-200\eps}}\]

\noi
for $s \geq \frac{1}{2}$.
Thus, we have
$\mathcal{M}_2 \lesssim 1$
by summing over  two frequencies for $\Ld_j^{-2+200\eps}$,
$k_2, k_3$,
 and then $k$ for $z_k^2$.
 Hence, \eqref{O33} holds for $s = \frac{1}{2}$.

\medskip
\noi
$\circ$ Case 4: Both $\Phi(\bar{k})$  and $\Phi(\bar{\jmath})$ satisfy \eqref{PHI2}.
As in Case 2, given $k_3$,
there are at most $O(|k_2|^\frac{1}{40})$ possible choices for $k_2$.
Thus, we have
\[M_5^2 \lesssim \frac{1}{ |k_2|^{2s}|k_3|^{2s}\Ld_j^2}
\ll \frac{1}{ |k_2|^{\frac{1}{40}}|k_3|^{4s-\frac{1}{40}}\Ld_j^{2-200\eps}}\]

\noi
for $s \geq \frac{1}{2}$.
 Hence, \eqref{O33} holds for $s = \frac{1}{2}$.
\end{proof}

\begin{lemma} \label{LEM:O4}
Suppose that  $v$ and $w$ satisfy \eqref{v1}.
Then, the following estimates hold:
\begin{align} \label{O41}
\|\N_6(v)\|_{H^\frac{1}{2}} & \lesssim  \, \|v\|_{H^\frac{1}{2}}^7 \\
\label{O42}
\|\N_6(v) - \N_6(w) \|_{H^\frac{1}{2}}
& \lesssim   \big(\|v\|_{H^\frac{1}{2}}+ \|w\|_{H^\frac{1}{2}}\big)^6 \|v-w\|_{H^\frac{1}{2}}.
\end{align}
\end{lemma}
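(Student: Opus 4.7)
The strategy is to substitute the equation \eqref{v1} into the time derivative appearing in $\N_6$. Expand $\dt(v_{k_2}v_{k_3}) = (\dt v_{k_2})v_{k_3}+v_{k_2}(\dt v_{k_3})$ and use $\dt v_{k_i} = \N(v)_{k_i}+\RR(v)_{k_i}$, which is itself cubic in $v$. This converts $\N_6$ into the sum of a genuine septi-linear expression (when $\N_{k_i}$ is substituted) and a quinti-linear expression with a repeated index (when $\RR_{k_i}$ is substituted). In the $\N_{k_i}$ piece, the substitution introduces an internal resonance function $\Phi(\bar{l})$ with $l_1+l_2+l_3=k_2$ (or $k_3$) into the phase, but since we do not integrate by parts in the $l$-variables this phase plays no further role. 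In either piece, substituting $\dt v_{k_i}$ contributes one extra factor of $|k_i|$ to the numerator (either from the $\dx$ in $\N$ or from the prefactor in $\RR$). I would prove \eqref{O41}; \eqref{O42} follows by an entirely analogous argument.

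By duality and Cauchy--Schwarz in the seven $u$-variables, the septi-linear part of the estimate reduces to bounding
\[
\mathcal{M}_6 \;:=\; \bigg(\sum_{k}\sum_{\substack{k_1+k_2+k_3=k\\ j_1+j_2+j_3=k_1\\ l_1+l_2+l_3=k_2\\ \Phi(\bar{k}),\,\Phi(\bar{\jmath})\ne 0,\ k^*>n}}M_6^2\,z_k^2\bigg)^{1/2}\lesssim 1
\]
for $\|z\|_{L^2}=1$, where
\[
M_6 \;\sim\; \frac{|k|^{3/2}\,|k_1|\,|k_2|}{|\Phi(\bar{k})|\,|\Phi(\bar{\jmath})|\,|j_1|^{1/2}|j_2|^{1/2}|j_3|^{1/2}\,|l_1|^{1/2}|l_2|^{1/2}|l_3|^{1/2}\,|k_3|^{1/2}},
\]
together with an analogous, strictly easier bound for the quinti-linear remainder coming from $\RR_{k_i}$ (where two of the seven indices collapse to a single $k_i$). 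The key structural gain compared with Lemma~\ref{LEM:N2} and Lemma~\ref{LEM:O3} is \emph{double smoothing}: both $|\Phi(\bar{k})|^{-1}$ and $|\Phi(\bar{\jmath})|^{-1}$ are now present. Combined with the frequency restrictions (a')--(e') defining $\N_{222}^{(-n)}$ (so in particular $|k_2|,|k_3|\ll|k_1|^{1/100}$, $|j_1|\ll|k_1|^{1+1/100}$, and $|j_1|\sim|j_2|\sim|j_3|$ when $\Phi(\bar{\jmath})$ is balanced), this double smoothing more than absorbs the extra $|k_2|$ in the numerator and the natural loss from increasing multilinearity.

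I would then split the analysis into four cases according to whether each of $\Phi(\bar{k})$ and $\Phi(\bar{\jmath})$ satisfies \eqref{PHI1} or \eqref{PHI2}, in direct parallel to Lemma~\ref{LEM:O3}. In each case I would apply the relevant lower bound $|\Phi|\gtrsim (k^*)^2\ld$ or $k^*\Ld$, sum first in the $\ld$- or $\Ld$-type variables (accepting an $\eps$-loss if needed), then in the tiny frequencies $k_2,k_3$ (trivial by (a')), then in the internal $l$- and $j$-variables, and finally in $k$ against $z_k^2$. The main obstacle will be the doubly-balanced case where both $\Phi(\bar{k})$ and $\Phi(\bar{\jmath})$ are of the form \eqref{PHI2}, so that each denominator gives only $k^*\Ld$-type smoothing rather than $(k^*)^2\ld$. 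Here the decisive ingredient is condition (e') (which forces $\Ld_k\ll\max(|j_2|,|j_3|)^{1/100}$) combined with the Case~4 counting argument of Lemma~\ref{LEM:O3}: for fixed $k_3$, (e') and $|k_1|\sim|k_2|$ pin $k_2$ down up to $O(|k_2|^{1/40})$ choices, so that the $\Ld_k$-smoothing can be traded for a small $|k_2|^{-\delta}|k_3|^{-\delta}$ gain while preserving summability of the six remaining variables. The analogous counting in $\Ld_j$ handles the $j$-side in parallel. This is precisely where the endpoint $s=\tfrac{1}{2}$ becomes tight, and the septi-linear estimate closes there.
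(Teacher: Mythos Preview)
Your strategy of expanding $\dt v_{k_i}=\N_{k_i}+\RR_{k_i}$ into a genuine septi-linear form is workable, but it is not the route the paper takes. The paper treats $\dt v_{k_2}$ as a \emph{black box}: using Lemma~\ref{LEM:H2} it bounds $\|\dt v\|_{H^{-1/2-\eps}}\lesssim\|v\|_{H^{1/2}}^3$ once and for all, so that the reduction by duality and Cauchy--Schwarz leads to a \emph{five}-variable multiplier sum (over $j_1,j_2,j_3,k_2,k_3$) with weight
\[
M_6=\frac{|k|^{3/2}|k_1|\,|k_2|^{1/2+\eps}}
{|\Phi(\bar{k})|\,|\Phi(\bar{\jmath})|\,|j_1|^{1/2}|j_2|^{1/2}|j_3|^{1/2}|k_3|^{1/2}},
\]
rather than the seven-variable sum you write. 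This avoids the $l$-variables entirely and makes the case analysis short; none of the four cases needs a counting argument, and in particular Case~4 (both $\Phi$ of type \eqref{PHI2}) is the \emph{easiest}: one gets $M_6^2\lesssim|j_1|^{-1+\eps}\Ld_k^{-2}\Ld_j^{-2}$ immediately from $|k_2|\sim|k_3|\sim|k_1|$ and $|j_1|\gtrsim|k_1|$. Condition (e') is never invoked in this lemma (it is used in Lemma~\ref{LEM:O3}, not here), and your identification of Case~4 as the main obstacle is off.

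Your seven-linear route can be carried out, but two points in your plan would fail as written. First, the order of summation: you propose summing $k_2,k_3$ \emph{before} the $l$-variables, but once $k_2$ is summed the constraint $l_1+l_2+l_3=k_2$ is gone and $\sum_{l_2,l_3}|l_1|^{-1}|l_2|^{-1}|l_3|^{-1}$ diverges. You must sum the $l$'s first (for fixed $k_2$ one has $\sum_{l_1+l_2+l_3=k_2}|l_1|^{-1}|l_2|^{-1}|l_3|^{-1}\lesssim|k_2|^{-1}(\log|k_2|)^2$), which effectively recovers the paper's $|k_2|^{1+2\eps}$ weight and reduces you to the paper's five-variable estimate. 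Second, your claim that summing $k_2,k_3$ is ``trivial by (a')'' is only valid when $\Phi(\bar{k})$ satisfies \eqref{PHI1}; in Cases~3 and~4 one has $|k_2|\sim|k_3|\sim|k_1|$ and (a') does not apply, so the summation there must instead come from $\Ld_k^{-2}$.
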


\begin{proof} 
Assume that the time derivative falls on $v_{k_2}$ in \eqref{O333}.
i.e. we have
\[ \N_6(t) \sim \sum_{\substack{k_1+ k_2+ k_3 = k \\
j_1 + j_2 +j_3 = k_1 \\ \Phi(\bar{k}), \Phi(\bar{\jmath}) \ne 0\\k^* > n}}
\frac{k k_1e^{i t'(\Phi(\bar{k})+\Phi(\bar{\jmath}))}}{ \Phi(\bar{k})\Phi(\bar{\jmath})} \big(v_{j_1}v_{j_2}v_{j_3}\big)
\dt v_{k_2} v_{k_3}. \]

\noi
Moreover, assume $|j_2| \geq |j_3|$ and $|k_2| \geq |k_3|$.
We only prove \eqref{O41} since \eqref{O42} follows in a similar manner.
In view of Lemma \ref{LEM:H2}, it suffices to prove
\begin{equation}
\|\N_6\|_{H^\frac{1}{2}}
\lesssim \|\dt v \|_{H^{-\frac{1}{2}-\eps}} \|v\|_{H^\frac{1}{2}}^4
\end{equation}

\noi
for some small $\eps >0$.
By duality, it suffices to prove
\begin{align} \label{O43}
 \sum_{\substack{k_1+ k_2+ k_3 = k \\
j_1 + j_2 +j_3 = k_1 \\ \Phi(\bar{k}), \Phi(\bar{\jmath}) \ne 0\\k^* > n}}
M_6 u_{j_1}u_{j_2}u_{j_3}u_{k_2}u_{k_3} z_k
\lesssim  \|u\|_{L^2}^5
\end{align}

\noi
where $\|z\|_{L^2} = 1$ and $M_6 $ is given by
\[ M_6 = M_6(j_1,j_2,j_3, k_2, k_3) : =
\frac{|k|^\frac{3}{2} |k_1| |k_2|^{\frac{1}{2}+\eps} }
{|\Phi(\bar{k})||\Phi(\bar{\jmath})||j_1|^\frac{1}{2}|j_2|^\frac{1}{2}|j_3|^\frac{1}{2}|k_3|^\frac{1}{2}}.\]

\noi
As before, by Cauchy-Schwarz inequality, we have
$\text{LHS of }\eqref{O43}
 \leq
\mathcal{M}_6
\|u\|_{L^2}^5$,
where $\mathcal{M}_6 = \big( \sum_{\substack{k_1+ k_2+ k_3 = k \\
j_1 + j_2 +j_3 = k_1 } }
 M_6^2 z_k^2\big)^\frac{1}{2}$.
Let $\ld_k$ and  $\ld_j$ (and $\Ld_k$ and  $\Ld_j$) be as in \eqref{LD1}
(and as in \eqref{LD2}) for $\{k_i\}$ and $\{j_i\}$, respectively.

\medskip

\noi
$\circ$ Case 1:  Both $\Phi(\bar{k})$ and $\Phi(\bar{\jmath})$ satisfy \eqref{PHI1}.
By (b') and $|j_1| \geq \max(|j_2|, |j_3|)$, we have
\[ M_6^2 \lesssim
\frac{1 }
{|j_1|^{4-\frac{1}{50}(\frac{1}{2}+\eps)}|j_2||j_3||k_3|\ld_k^2 \ld_j^2}
\leq  \frac{1 }
{|j_1|^{1+\eps}|j_2|^{1+\eps}|j_3|^{1+\eps}|k_3|^{1+\eps}\ld_k^2 \ld_j^2}
\]

\noi
for sufficiently small $\eps>0$.
Thus, we have
$\mathcal{M}_6 \lesssim 1$
by summing over $k_3, j_1,j_2,j_3$,
 and then $k$ for $z_k^2$.
Hence, \eqref{O43} holds for $s = \frac{1}{2}$.

\medskip

\noi
$\circ$ Case 2:  $\Phi(\bar{k})$ satisfies \eqref{PHI1}
and $\Phi(\bar{\jmath})$ satisfies \eqref{PHI2}.
In this case, we have
\begin{equation*}
M_6^2 \lesssim
\frac{1 }
{|j_1|^{2-\frac{1}{50}(\frac{1}{2}+\eps)}|j_2||j_3||k_3|\ld_k^2 \Ld_j^2}
\leq  \frac{1 }
{|j_1|^{1+\eps}|j_2|^{1+\eps}|j_3|^{1+\eps}|k_3|^{1+\eps}\ld_k^2 \Ld_j^2}
\end{equation*}

\noi
for sufficiently small $\eps>0$.
Thus, we have
$\mathcal{M}_6 \lesssim 1$
by summing over $k_3, j_1,j_2,j_3$,
 and then $k$ for $z_k^2$.
Hence, \eqref{O43} holds for $s = \frac{1}{2}$.

\medskip

\noi
$\circ$ Case 3:  $\Phi(\bar{k})$ satisfies \eqref{PHI2}
and $\Phi(\bar{\jmath})$ satisfies \eqref{PHI1}.
In this case, we have
\begin{equation*}
M_6^2 \lesssim
\frac{1}
{|j_1||j_2||j_3||k_3|^{1-2\eps} \Ld_k^2 \ld_j^2}
\leq
\frac{1}
{|j_2|^\frac{3}{2}|j_3|^\frac{3}{2}|k_3|^{1-2\eps} \Ld_k^2 \ld_j^2}.
\end{equation*}

\noi
Thus, we have
$\mathcal{M}_6 \lesssim 1$
by summing over two frequencies for $\Ld_k^{-2}$,
$j_2, j_3$,
 and then $k$ for $z_k^2$.
Hence, \eqref{O43} holds for $s = \frac{1}{2}$.

\medskip

\noi
$\circ$ Case 4:  Both $\Phi(\bar{k})$ and $\Phi(\bar{\jmath})$ satisfy \eqref{PHI2}.
In this case, we have $|k_1|\sim|k_2|\sim|k_3|$
and $|j_1|\sim|j_2|\sim|j_3|$.
In this case, we have
$M_6^2 \lesssim |j_1|^{-1+\eps} \Ld_k^{-2}\Ld_j^{-2}$
Thus, we have
$\mathcal{M}_6 \lesssim 1$
by summing over two frequencies for $\Ld_k^{-2}$,
two frequencies for $\Ld_j^{-2}$,  and then $k$ for $z_k^2$.
Hence, \eqref{O43} holds for $s = \frac{1}{2}$.
\end{proof}

Before we present the estimate on $\N_7$,
recall that  we have $|k_1| > n$.
Thus, we have $|j_1| \gtrsim n$ since $|j_1| \gtrsim |k_1|$.

\begin{lemma} \label{LEM:O5}
There exists $\al > 0$ such that
\begin{align} \label{O51}
\|\N_7(v)\|_{H^\frac{1}{2}} & \lesssim  n^{-\al} \, \|v\|_{H^\frac{1}{2}}^5 \\
\label{O52}
\|\N_7(v) - \N_7(w) \|_{H^\frac{1}{2}}
& \lesssim  n^{-\al} \, \big(\|v\|_{H^\frac{1}{2}}+ \|w\|_{H^\frac{1}{2}}\big)^4 \|v-w\|_{H^\frac{1}{2}}.
\end{align}
\end{lemma}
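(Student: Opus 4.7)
The plan is to derive \eqref{O51}--\eqref{O52} by a direct multiplier comparison with Lemma~\ref{LEM:O3}. Reading off \eqref{O333}, the multiplier associated to $\N_7$ in an $H^{1/2}$ estimate is
\[
M_7 = \frac{|k|^{3/2}|k_1|}{|\Phi(\bar{k})||\Phi(\bar{\jmath})||j_1|^{1/2}|j_2|^{1/2}|j_3|^{1/2}|k_2|^{1/2}|k_3|^{1/2}} = \frac{M_5}{|\Phi(\bar{k})|},
\]
where $M_5$ is the multiplier from Lemma~\ref{LEM:O3} evaluated at $s = \frac{1}{2}$. In other words, $\N_7$ differs from $\N_5$ only by the extra factor $|\Phi(\bar{k})|^{-1}$, and the strategy is to convert this denominator into the required gain $n^{-\alpha}$ using the frequency restriction $k^* > n$.

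To carry this out, I would dualize \eqref{O51} against a normalized $z\in l^2_k$ and apply Cauchy-Schwarz exactly as in the proof of Lemma~\ref{LEM:O3}, reducing matters to showing
\[
\mathcal{M}_7 = \bigg(\sum_k \sum_{\substack{k_1+k_2+k_3=k\\ j_1+j_2+j_3=k_1}} M_7^2\, z_k^2\bigg)^{1/2} \lesssim n^{-\alpha}.
\]
The key pointwise observation is that both \eqref{PHI1} and \eqref{PHI2} yield $|\Phi(\bar{k})| \gtrsim k^*$, while the restriction \eqref{v55} enforces $k^* > n$ throughout the sum. Hence $|\Phi(\bar{k})|^{-1} \lesssim n^{-1}$ uniformly on the support, so $M_7 \lesssim n^{-1} M_5$. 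Plugging this in and invoking the bound $\mathcal{M}_5 \lesssim 1$, already established through the four-case analysis in Lemma~\ref{LEM:O3}, yields $\mathcal{M}_7 \lesssim n^{-1}$, which proves \eqref{O51} with $\alpha = 1$ (any smaller positive $\alpha$ would also do).

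The Lipschitz estimate \eqref{O52} follows from the same argument after expanding $v_{j_1}v_{j_2}v_{j_3}v_{k_2}v_{k_3} - w_{j_1}w_{j_2}w_{j_3}w_{k_2}w_{k_3}$ as a standard telescoping sum of five terms, each involving a single difference $v_i - w_i$ together with four factors of $v$ or $w$, and applying the estimate above to each term. I anticipate no genuine obstacle here: because the additional denominator $|\Phi(\bar{k})|$ is strictly cheaper than the borderline sums of Lemma~\ref{LEM:O3}, no new case analysis according to which of \eqref{PHI1}/\eqref{PHI2} $\Phi(\bar{k})$ and $\Phi(\bar{\jmath})$ satisfy needs to be reopened, and the single pointwise multiplier comparison above suffices.
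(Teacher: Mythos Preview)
Your argument is correct and, in fact, slightly cleaner than the paper's own proof. You reduce to Lemma~\ref{LEM:O3} via the single pointwise identity $M_7 = M_5/|\Phi(\bar{k})|$ and the uniform bound $|\Phi(\bar{k})|\gtrsim k^* > n$ (valid in either regime \eqref{PHI1} or \eqref{PHI2}), which immediately factors out $n^{-1}$ and lets you invoke the already-established $\mathcal{M}_5\lesssim 1$ without reopening any case analysis.

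The paper instead compares to Lemma~\ref{LEM:O4}: it observes that the case-by-case bounds on $M_6^2$ there carry an unused small power of $|j_1|$ (Cases~1, 2, 4) or of $|k_3|\sim|k_1|$ (Case~3), and since $|j_1|\gtrsim|k_1|>n$ (resp.\ $|k_3|>n$) this slack converts to the $n^{-\al}$ gain. That route works but requires going back into the individual case bounds rather than using a finished lemma as a black box. Your approach trades this for a single multiplier inequality, at the cost of relying on Lemma~\ref{LEM:O3} rather than Lemma~\ref{LEM:O4}; since both lemmas are already proved at this point, there is no real cost, and your version is the more economical one.
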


\begin{proof}
This lemma immediately follows from the proof of Lemma \ref{LEM:O4}
once we note that there is an extra (small) power of $|j_1|$ in the denominator
except for Case 3. In Case 3, we have an extra (small) power of $|k_3| \sim |k_1| > n$.
\end{proof}

The remaining part of the argument is basically the same as in Section \ref{SEC:LWP1}.
For $n \in \mathbb{N}$,
define $G_n(v, v_0)$ by $G_n(v, v_0) = G_n^{(1)}(v, v_0)+G_n^{(2)}(v, v_0)$,
where $G_n^{(1)}$ and $G_n^{(2)}$ are given by
\begin{align*}
G_n^{(1)} = & \, \N^{(-n)}_{1}(v)(t)-\N_{1}^{(-n)}(v_0)
+ \N_7(v)(t)-\N_7(v_0)\\
G_n^{(2)} = & \,  \int_0^t \RR(v)(t') + \N^{(n)}(v)(t')+ \N_{21}^{(-n)}(v)(t') \notag \\
& \hphantom{XXX}+ \N_{221}^{(-n)}(v)(t')+ \N_4(v)(t')+\N_5(v)(t')+\N_6(v)(t')dt'.
\end{align*}

\noi
From \eqref{v6}, \eqref{N23}, \eqref{O13}, \eqref{O17}, and \eqref{O333},
we have
\begin{align*}
v(t)  = v_0 +  G_n(v, v_0)(t).
\end{align*}

\noi
if $v$ is a solution to \eqref{v1}.
It follows from Lemmata \ref{LEM:R}--\ref{LEM:N3},
\ref{LEM:O1},\ref{LEM:O2}, and \ref{LEM:O3}--\ref{LEM:O5}
that we obtain a priori bounds \eqref{M2} and \eqref{M3}
for $s = \frac{1}{2}$
(with $G_n$ in place of $F_n$
and with a higher power in \eqref{M2}.)
Then, the unconditional local well-posedness in $H^\frac{1}{2}$
with Lipschitz dependence on initial data
follows
as in Section \ref{SEC:LWP1}.

\end{document}